\documentclass[a4,psfig,english,11pt,epsf,portrait]{article}
\usepackage{amsmath,amsfonts,latexsym,amscd,amssymb}
\usepackage{epsfig}
\usepackage{amsmath}
\usepackage{amsthm}
\usepackage{amssymb}
\usepackage{enumerate}
\usepackage{color}
\usepackage{hyperref}
\hypersetup{hidelinks}
\definecolor{purple}{rgb}{0.65, 0, 1}
\definecolor{orange}{rgb}{1,.5,0}
\definecolor{brown}{rgb}{.9,.73,.26}

\hoffset = -15pt
\voffset = 10pt
\textwidth = 480pt
\textheight = 640pt
\topmargin = 0pt
\headheight = 0pt
\headsep = 0pt
\oddsidemargin = 0pt
\evensidemargin = 100pt
\marginparwidth = 100pt
\marginparsep = 0pt

\pagenumbering{arabic}

\newtheorem{theorem}{Theorem}[section]
\newtheorem{remark}{Remark}[section]
\newtheorem{proposition}[theorem]{Proposition}
\newtheorem{definition}{Definition}[section]
\newtheorem{lemma}[theorem]{Lemma}

\numberwithin{equation}{section}

\def\R{{\mathbb {R}}}

\def\V{{\frac{\mu}{1+t}}}

\usepackage{color}

\def\R{\mathbb{R}}

\title{Global existence for   wave equations with 
scale-invariant time-dependent damping
 and time derivative nonlinearity}

\author{
Ahmad Z. Fino\\
{\it \small 
College of Engineering and Technology, American University of the Middle East, Kuwait}\\
Mohamed  Ali  Hamza\\
{\it \small 
Imam Abdulrahman Bin Faisal University,
 Dammam, 34212, Saudi Arabia}\\
}

\date{}

\begin{document}
	\maketitle

\begin{abstract}
This paper addresses the Cauchy problem for wave equations with scale-invariant time-dependent damping and nonlinear time-derivative terms, modeled as
$$
\left\{
\begin{array}{ll}
 \displaystyle\partial_{t}^2u - \ \Delta u +\V\partial_tu= f(\partial_tu), &\quad  x\in \mathbb{R}^n, t>0, \\
u(x,0)= u_0(x), \quad \partial_tu(x,0)= u_1(x) &\quad x\in\mathbb{R}^n,
\end{array}
\right.
$$
where $f(\partial_tu)=|\partial_tu|^p $ or $|\partial_tu|^{p-1}\partial_tu$ with $p>1$ and $\mu>0$. We prove global existence of small data solutions in low dimensions $1\leq n\leq 3$ by using energy estimates in appropriate Sobolev spaces. Our primary contribution is an existence result for  $p>1+2/ \mu$,
 in the one-dimensional case,  when  $\mu \le 2$, which in conjunction with prior blow-up results from \cite{Our2}, establish that the critical exponent for small data solutions in one dimension is  $p_G(1,\mu)=1+2/{\mu}$, when  $\mu \le 2$. To the best of our knowledge, this is the first identification of the critical exponent range
  for the time-dependent damped wave equations with  scale-invariant and time-derivative nonlinearity.

\end{abstract}

\medskip

\noindent {\bf MSC 2020 Classification}:  35A01, 35B33, 35L15, 35D35

\noindent {\bf Keywords:}    Nonlinear wave equations, global existence, critical exponent, scale-invariant damping, time-derivative nonlinearity.



\section{Introduction}
\par\quad

In this work, we study the global (in time) existence of small data solutions to the Cauchy problem for  the  semilinear  wave equations with 
scale-invariant time-dependent damping
and  power-nonlinearity of derivative type
\begin{equation}
\label{NLW}
\left\{
\begin{array}{ll}
 \displaystyle\partial_{t}^2u - \ \Delta u +\V\partial_tu= f(\partial_tu), &\quad  x\in \mathbb{R}^n, t>0, \\
u(x,0)= u_0(x), \quad \partial_tu(x,0)= u_1(x) &\quad x\in\mathbb{R}^n,
\end{array}
\right.
\end{equation}
where $1\leq n\leq 3$, $p>1$,  $\mu>0$, $f(\partial_tu)=|\partial_tu|^p $ or $|\partial_tu|^{p-1}\partial_tu$, and where the  initial data 
$u_0,$ and $u_1$ are  chosen in the appropriate energy space, namely
 \begin{equation}\label{initialdata}
 (u_0,u_1)\in H^{m+2}(\mathbb{R}^n)\times H^{m+1}(\mathbb{R}^n),\qquad\hbox{with} \quad m=\left\{\begin{array}{ll}0&\quad \hbox{when}\,\,n=1,\\\\
  1&\quad \hbox{when}\,\,n=2,3.\\
  \end{array}
  \right.
  \end{equation}

The corresponding linear equation  to \eqref{NLW}  
\begin{align}\label{Eq_Linear_Wave_Scale}
	\begin{cases}
		\displaystyle{\partial_{t}^2u-\Delta u+\frac{\mu}{1+t}\partial_{t}u=0,}&x\in\mathbb {R}^n,\ t>0,\\
		(u,\partial_{t}u)(0,x)=(u_0,u_1)(x),&x\in\mathbb {R}^n,
	\end{cases}
\end{align}
is invariant under the following transformation:
$$\tilde{u}(x,t)=u(\lambda x, \lambda (1+t)-1), \ \lambda>0.$$
 According to the classification introduced by \cite{Wirth2004}, the above scaling justifies the designation of the \textit{scale-invariant} case for \eqref{NLW}. Indeed, the behavior of the solutions to \eqref{Eq_Linear_Wave_Scale} is determined by the parameter $\mu$ which provides a borderline between the \emph{non-effective dissipation} and the \emph{effective dissipation}. In fact, the non-effective damping means its solution somehow having the behavior of the free wave equation $\partial_t^2u-\Delta u=0$, and the effective damping stands for its solution somehow having the behavior of the corresponding parabolic equation $\frac{\mu}{1+t}\partial_tu-\Delta u=0$. \\

  By ignoring the damping term
 $\V \partial_tu$, the  problem \eqref{NLW}
is reduced to the classical semilinear wave equation,  namely
\begin{equation}
\label{P}
\left\{
\begin{array}{ll}
 \partial_{t}^2u - \ \Delta u = |\partial_t u|^p &\quad x\in\R^n,\,\,t>0, \\
  u(x,0)=u_0(x), \quad \partial_tu(x,0)=u_1(x) &\quad x\in\R^n,
\end{array}
\right.
\end{equation}
 for which we have the Glassey exponent. This case  is characterized by a critical power, denoted by $p_G$,  and  given by
\begin{equation}\label{Glassey}
p_G=p_G(n):=1+\frac{2}{n-1}.
\end{equation}
More precisely,  if $p \le p_G$ then there is no global solution for  \eqref{P},
for small initial data, non negative   and compactly  supported, 
  and for $p> p_G$ a global solution exists for small initial data; see e.g. \cite{Hidano1,Hidano2,John1,Rammaha,Sideris,Tzvetkov,Zhou1}.\\

In recent years, many papers have been devoted to the study of blow-up results and lifespan
estimates for the  solution of \eqref{NLW}.
 Indeed,   Lai and Takamura  showed in \cite{LT2} a blow up region for $p$, namely $p\in (1,p_G(n+2\mu)]$.
 Furthermore, an important refinement  was performed in
 \cite{Palmieri}, where the new bound, for $\mu \in (0,2)$. 
Note that the result in \cite{Palmieri} was later improved in \cite{Our2} by extending the upper bound for $p$ to $p_G(n+\mu)$, for any $\mu>0$.   In other words,  the upper bound is given by:
\begin{equation}\label{critical}
 p_G(n+\mu)=1+\frac{2}{n-1+\mu}, \quad \quad n\ge 1.
\end{equation}
It is natural  to investigate  the  critical power
 denoted here by $p_G(n,\mu)$, delimiting blow-up and global existence regions of small data solutions of problem \eqref{NLW}.\\

\par

%

To establish global existence results for the given problem, it is essential to analyze the associated linear problem and utilize decay rates in appropriate Sobolev spaces. That question was solved   in \cite{Wirth2004}  (see also \cite{AM,U79,Rei}).   
Indeed, by exploiting the partial Fourier transformation with respect to spatial variables  and   some basic properties of the explicit representations of solutions in terms of Bessel functions   in \cite{Wirth2004}     
provide  the following  estimate
 $$\|(\nabla u(t), \partial_tu(t))\|_{L^2}\le  C  (1+t)^{-\min (1,\frac{\mu}2)},$$ 
where the constant $C$  depends on  the  initial data.\\

Similarly,  the analogous  estimate for the  linear problem 
with space-dependent damping, 
namely
$\partial_{t}^2u - \ \Delta u +\frac{\mu}
 {\sqrt{1+|x|^2}}
\partial_tu= 0,$  was done by Ikehata–
Todorova–Yordanov in \cite{Ikehata}. Indeed,  they showed that, in the case where the initial data  are compactly supported, the  solution
 satisfies the same kind of  energy estimates, namely
$\|(\nabla u(t), \partial_tu(t))\|_{L^2}\le C
    (1+t)^{-\gamma},$
 for some  $\gamma=\gamma (n,\mu)>0$,
 and where the constant $C$ depends on the support of the initial data. \\

We would like to draw the attention of the reader to
the fact that a better understanding of the dependence of the constant related to the initial data is crucial for establishing a global existence results for the coresponding nonlinear problem. In this context,   in recent work in  \cite{S13}, the author successfully established a global existence result for the nonlinear problem with space-dependent damping in an exterior domain. This achievement was made possible by skillfully employing weighted energy estimates for the associated linear problem.\\

Up to our knowledge, there is no result to
the global existence of the solution of the Cauchy problem (\ref{NLW}).  
In the present paper, we {\bf aim} to provide the small data global existence of the mild solution for the Cauchy problem (\ref{NLW}). To do this, the first step consists of the study of the associated homogeneous problem, by the use of some energy estimates which are totally new based on some estimates invented by many researchers, e.g. \cite{ITY},  and the use  of their decay rates in suitable space (Sobolev space) as stated in Section \ref{sec2}. Thanks to the fixed point theorem,  we deal with nonlinearity.
Therefore, the main important result of this paper 
is the proof of the following: 
\begin{equation}\label{critical1}
p_G(n,\mu)\le 1+\frac{2}{\alpha}, \quad \textrm{if} \quad 1\le n\le 3,
\end{equation}
where 
\begin{equation}\label{alpha}
\alpha:=\min (\mu,2).
\end{equation}
The above result gives the predicted result in the one-dimensional case when  $\mu \le 2$. Namely, by combining the previous blow-up results  and \eqref{critical1}, we deduce that      
\begin{equation}\label{critical11}
p_G(1,\mu)= 1+\frac{2}{\mu}.
\end{equation}

At the end of this section we prepare notation and several definitions used throughout this paper. 
We denote by $C$ a positive constant, which may change from line to line.
$L^p = L^p(\mathbb{R}^n)$
stands for the usual Lebesgue space,
and
$H^{k} = H^{k}(\mathbb{R}^n)$
for $k \in \mathbb{Z}_{\ge 0}$
is the  Sobolev space defined by:
\begin{align*}
	H^{k}(\mathbb{R}^n) = \left\{ f \in L^2(\mathbb{R}^n) ;
			\| f \|_{H^{k}} = \sum_{\ell = 0}^k \|  \partial_x^{\ell} f \|_{L^2} < \infty \right\}.
\end{align*}

The paper is organized as follows: we start in Section \ref{sec2} by providing the energy estimates of mild/strong solutions of the corresponding homogeneous equation of \eqref{NLW}. We derive the main results (Theorems  \ref{globalexistence1}, \ref{globalexistence2}) in Section \ref{sec3} while Section \ref{sec4} is dedicated to the proof of these theorems.

\section{Homogeneous equation}\label{sec2}
In this section, we consider the following homogeneous problem
\begin{equation}\label{1}
\begin{cases}
\displaystyle \partial_{t}^2u-\Delta u +\frac{\mu}{1+t} \partial_{t}u =0, &x\in \mathbb{R}^n,\,t>0,\\
u(x,0)=  u_0(x),\, \partial_{t}u(x,0)=  u_1(x),&x\in \mathbb{R}^n,\\
\end{cases}
\end{equation}
where $n\geq1$, $\mu>0$. To begin with, we give the definition of a strong solution to \eqref{1}. 
\begin{definition}[Strong solution]${}$\\
Let $n\geq1$, and $(u_0,u_1)\in H^2(\mathbb{R}^n)\times H^1(\mathbb{R}^n)$. A function $u$ is said to be a strong solution to \eqref{1} if
	$$	u\in  \mathcal{C}\left([0,\infty),H^2(\mathbb{R}^n)\right)\cap \mathcal{C}^{1}\left([0,\infty),H^1(\mathbb{R}^n)\right)\cap \mathcal{C}^{2}\left([0,\infty),L^2(\mathbb{R}^n)\right)
,$$
	and $u$ has initial data $u(x,0)=u_0(x)$, $\partial_tu(x,0)=u_1(x)$ and satisfies
	the equation in \eqref{1} in the sense of $L^2(\mathbb{R}^n)$.
\end{definition}
\begin{theorem}[Theorem 2.27  in \cite{Ikawa}]\label{existencetheorem}${}$\\
	Let $n\geq1$. For each $(u_0,u_1)\in H^{m+2}(\mathbb{R}^n)\times H^{m+1}(\mathbb{R}^n)$, $m\in\mathbb{N}$, there exists a unique strong solution $u$ to \eqref{1} such that
	$$u\in\bigcap_{j=0}^{m+2}  \mathcal{C}^{m+2-j}\left([0,\infty),H^j(\mathbb{R}^n)\right).$$
	\end{theorem}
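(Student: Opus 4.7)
The plan is to handle the base case $m=0$ by a standard energy estimate for the wave equation, then propagate regularity by exploiting the spatial translation invariance of the operator and using the equation itself to exchange time derivatives for spatial ones.

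\textbf{Base case and uniqueness.} Multiplying the equation in \eqref{1} by $\partial_t u$ and integrating over $\mathbb{R}^n$ yields the energy identity
\begin{equation*}
\frac{1}{2}\frac{d}{dt}\bigl(\|\nabla u(t)\|_{L^2}^2+\|\partial_t u(t)\|_{L^2}^2\bigr)=-\frac{\mu}{1+t}\|\partial_t u(t)\|_{L^2}^2 \le 0,
\end{equation*}
so the natural energy is nonincreasing. This furnishes both the a priori bound and, applied to the difference of two putative solutions, uniqueness. Existence at the level $(u_0,u_1)\in H^2\times H^1$ is then obtained by a Galerkin approximation (or, equivalently, by Kato's theory for evolution equations) applied to the first-order system $\partial_t U=\mathcal{A}U+B(t)U$ with $U=(\nabla u,\partial_t u)^T$, where $\mathcal{A}$ is a constant-coefficient skew-adjoint first-order operator and $B(t)$ is a bounded, strongly continuous, time-dependent perturbation on $[0,\infty)$ because $\mu/(1+t)\le\mu$. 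Recovering $\partial_t^2 u$ from the equation gives the desired class $\mathcal{C}([0,\infty),H^2)\cap\mathcal{C}^1([0,\infty),H^1)\cap\mathcal{C}^2([0,\infty),L^2)$.

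\textbf{Higher regularity.} Since the coefficients of $\partial_t^2-\Delta+\frac{\mu}{1+t}\partial_t$ are independent of $x$, for every multi-index $\alpha$ with $|\alpha|\le m$ the function $\partial_x^\alpha u$ again solves \eqref{1} with initial data $(\partial_x^\alpha u_0,\partial_x^\alpha u_1)\in H^2\times H^1$. Applying the base case to each such commuted equation yields $u\in\mathcal{C}([0,\infty),H^{m+2})\cap\mathcal{C}^1([0,\infty),H^{m+1})$. The full chain $u\in\mathcal{C}^{m+2-j}([0,\infty),H^j)$ for $0\le j\le m+2$ then follows by a finite induction on the order of the time derivative: differentiating the identity $\partial_t^2 u=\Delta u-\frac{\mu}{1+t}\partial_t u$ in $t$ expresses $\partial_t^{k+2}u$ as a sum of terms of the form $c_\ell(t)\,\partial_t^\ell u$ with $\ell\le k+1$ and at most two additional spatial derivatives, where the coefficients $c_\ell(t)$ are smooth on $[0,\infty)$.

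The only potentially subtle point is the time-dependent damping $\mu/(1+t)$, but since it is smooth and bounded on $[0,\infty)$ and enters the energy derivative with a favorable sign, the standard arguments for the constant-coefficient damped wave equation carry over verbatim. Indeed, this is the content of Theorem~2.27 in Ikawa's monograph \cite{Ikawa}, which is invoked here rather than reproved.
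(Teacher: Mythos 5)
Your proposal is sound, but note that the paper does not prove this statement at all: it is quoted verbatim as Theorem~2.27 of Ikawa's monograph and simply invoked, so there is no in-paper argument to compare against. Your sketch is the standard route one would take to make the citation self-contained: the energy identity you write is exactly the one the paper itself derives later (equation \eqref{E0}) for a different purpose; the reduction to a first-order system $\partial_t U=\mathcal{A}U+B(t)U$ with $\mathcal{A}$ skew-adjoint and $B(t)$ a bounded, continuously time-dependent perturbation is the textbook existence mechanism; and commuting $\partial_x^\alpha$ through the $x$-independent coefficients, then trading time derivatives for spatial ones via $\partial_t^2u=\Delta u-\frac{\mu}{1+t}\partial_t u$, correctly produces the full regularity chain $u\in\bigcap_{j=0}^{m+2}\mathcal{C}^{m+2-j}([0,\infty),H^j)$, since $\mu/(1+t)$ is smooth and bounded on $[0,\infty)$. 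Two minor points you gloss over, both routine: the state $U=(\nabla u,\partial_t u)$ does not carry $u$ itself, so you must recover $u(t)=u_0+\int_0^t\partial_t u\,ds$ and check $u\in\mathcal{C}([0,\infty),L^2)$ separately before concluding $u\in\mathcal{C}([0,\infty),H^2)$; and the claim that $\partial_x^\alpha u$ "again solves \eqref{1}" should be justified by uniqueness applied to the solution with data $(\partial_x^\alpha u_0,\partial_x^\alpha u_1)$ (or by difference quotients), rather than by formally differentiating a solution whose extra regularity is what you are trying to establish. Neither is a real gap, and your final remark correctly identifies that the paper's intent is to cite rather than reprove this result.
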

Let
\begin{equation}\label{alpha0}
\alpha_0:=
\left\{
\begin{array}{ll}
 \mu\  &\textrm{  if}\ \  \ \mu \in (0,2),\\
2-\eta \  &\textrm{  if}\ \  \ \mu =2,\\
 2 &\textrm{  if}\ \  \ \mu >2,
\end{array}
\right.
\end{equation} 
where $\eta \in (0,2)$. We need to distinguish three cases depending on the value of the parameter $\mu$. In each case, we introduce a set of energy functionals. 

{\bf First case:  $\mu \in (0,2)$.} 
Now, we   introduce  the following functionals:
\begin{eqnarray}
E_0(u(t),\partial_tu(t),t):&=&\frac12\displaystyle\int_{\R^n}\big (|\nabla
u(t)|^2+
(\partial_tu(t))^2 \big ) {\mathrm{d}}x,\label{En}\qquad
\\
E_1(u(t),\partial_tu(t),t):&=&\displaystyle\int_{\R^n}\big(u(t)\partial_tu(t)+ \frac{1}{2(t+1)}u^2(t)
 \big ) {\mathrm{d}}x,\quad \label{In}\\
 E_2(u(t),\partial_tu(t),t):&=&E_0(u(t),\partial_tu(t),t)+\frac{\mu  }{2(t+1)}E_1(u(t),\partial_tu(t),t).\quad\label{E3}
\end{eqnarray}
By evaluating the time derivative of $E_2(u(t),\partial_tu(t),t)$, we prove  the following
\begin{lemma} \label{LE} Let $\mu\in (0,2)$. Assume that $(u_0,u_1)\in H^{2}(\mathbb{R}^n)\times H^{1}(\mathbb{R}^n)$,  then the strong solution $u$ of \eqref{1} satisfies, 
for all   $t\ge s\ge0$,
\begin{equation} \label{A1}
	\frac{\|u(t)\|^2_{L^2}}{(1+t)^{2}}
+	\|\partial_tu(t)\|^2_{L^2}+\|\nabla u(t)\|^2_{L^2}\leq C
 \left(\frac{1+s}{1+t}\right)^{\mu}
\left( 
	\frac{\|u(s)\|^2_{L^2}}{(1+s)^{2}}
+	\|\partial_tu(s)\|^2_{L^2}+\|\nabla u(s)\|^2_{L^2}\right).
\end{equation}
\end{lemma}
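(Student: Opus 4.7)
The plan is to compute the time derivative of the functional $E_2$ along strong solutions to \eqref{1}, show that it satisfies a differential inequality of the form $\frac{d}{dt}E_2(t)\le -\frac{\mu}{1+t}E_2(t)$, integrate this to obtain $E_2(t)\le\left(\frac{1+s}{1+t}\right)^\mu E_2(s)$, and finally verify that $E_2$ is equivalent, up to multiplicative constants depending on $\mu$, to the full energy $\|\nabla u\|_{L^2}^2+\|\partial_t u\|_{L^2}^2+\|u\|_{L^2}^2/(1+t)^2$ appearing on both sides of \eqref{A1}.

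First, using the equation $\partial_t^2 u=\Delta u-\frac{\mu}{1+t}\partial_t u$ and integrating by parts (legitimate because Theorem \ref{existencetheorem} provides an $H^2$ strong solution), I would compute $\frac{d}{dt}E_0$ and $\frac{d}{dt}E_1$ separately. The first yields the standard dissipation identity $\frac{d}{dt}E_0=-\frac{\mu}{1+t}\|\partial_t u\|_{L^2}^2$, while the second produces a term $\|\partial_t u\|_{L^2}^2-\|\nabla u\|_{L^2}^2$ plus cross terms in $\int u\partial_t u\,dx$ and $\|u\|_{L^2}^2/(1+t)^2$. Combining these with the product rule applied to the prefactor $\frac{\mu}{2(1+t)}$ sitting in front of $E_1$, many terms cancel and I expect to land on the clean identity
\begin{equation*}
\frac{d}{dt}E_2(t)=-\frac{\mu}{1+t}E_2(t)-\frac{\mu(2-\mu)}{4(1+t)^3}\|u(t)\|_{L^2}^2.
\end{equation*}
For $\mu\in(0,2)$ the last term is non-positive, so $(1+t)^\mu E_2(t)$ is non-increasing in $t$, and the claimed decay $E_2(t)\le\left(\frac{1+s}{1+t}\right)^\mu E_2(s)$ follows immediately.

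The remaining task is the two-sided equivalence between $E_2$ and the energy quantity on the sides of \eqref{A1}. The upper bound is a direct consequence of Cauchy--Schwarz applied to the cross term $\frac{\mu}{2(1+t)}\int u\partial_t u\,dx$. For the matching lower bound I would invoke Young's inequality $\frac{\mu}{2(1+t)}|u\partial_t u|\le \frac{\varepsilon}{2}(\partial_t u)^2+\frac{\mu^2}{8\varepsilon(1+t)^2}u^2$ with a parameter $\varepsilon\in(\mu/2,1)$. This interval is non-empty precisely because $\mu<2$, and such a choice guarantees that the resulting lower bound on $E_2$ retains strictly positive coefficients in front of each of $\|\nabla u\|_{L^2}^2$, $\|\partial_t u\|_{L^2}^2$, and $\|u\|_{L^2}^2/(1+t)^2$.

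The main obstacle I anticipate is the algebraic bookkeeping in the derivation of the identity for $\frac{d}{dt}E_2$: several terms of the form $\frac{1}{(1+t)^2}\int u\partial_t u\,dx$ and $\frac{1}{(1+t)^3}\|u\|_{L^2}^2$ must conspire to yield exactly the factor $\mu(2-\mu)$. This near-miraculous cancellation both justifies the precise coefficients chosen in the definitions of $E_1$ and $E_2$, and explains why the restriction $\mu<2$ surfaces in two separate places---in the sign of the residual term in the energy identity, and in the admissibility of the parameter $\varepsilon$ during the equivalence step.
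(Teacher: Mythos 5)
Your proposal is correct and follows essentially the same route as the paper: the same exact identity $\frac{d}{dt}E_2+\frac{\mu}{1+t}E_2+\frac{\mu(2-\mu)}{4(1+t)^3}\|u\|_{L^2}^2=0$, the same integration of $(1+t)^{\mu}E_2$, and the same Young's-inequality equivalence between $E_2$ and the full energy (your admissible window $\varepsilon\in(\mu/2,1)$ is just a rescaling of the paper's $\varepsilon\in(\mu/4,1/2)$, both nonempty exactly when $\mu<2$). The only remaining work is the algebraic verification of the cancellation you flag, which the paper carries out explicitly and which does close as you predict.
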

\begin{proof}	
Assume that
$(u_0,u_1)\in H^{2}(\mathbb{R}^n)\times H^{1}(\mathbb{R}^n)$, then  thanks to Theorem \ref{existencetheorem} the solution $u$ of \eqref{1} satisfies 
$u\in \bigcap_{j=0}^{2}  \mathcal{C}^{j}\left([0,\infty),H^{2-j}(\mathbb{R}^n)\right).$
Therefore,
 $E_0(u(t),\partial_tu(t),t)$ is a differentiable  function in time. Moreover, by multiplying \eqref{1} by $\partial_tu(t)$, and using the integration by parts formula, we get  for all   $t>0$,
\begin{equation}\label{E0}
 \frac{d}{dt}E_0(u(t),\partial_tu(t),t)=-\frac{\mu}{1+t}\displaystyle\int_{\R^n}
(\partial_tu(t))^2  {\mathrm{d}}x.
\end{equation}
Similarly, 
  $E_1(u(t),\partial_tu(t),t)$ is a differentiable  function. In a similar way, by multiplying \eqref{1} by $u(t)$, and  integration by parts, we get  for all   $t>0$,
\begin{eqnarray}\label{E1}
 \frac{d}{dt}E_1(u(t),\partial_tu(t),t)&=&\displaystyle\int_{\R^n}
(\partial_tu(t))^2  {\mathrm{d}}x
-\displaystyle\int_{\R^n}
|\nabla u(t)|^2  {\mathrm{d}}x\\
&&-\frac{1}{2(1+t)^2}\displaystyle\int_{\R^n}
(u(t))^2  {\mathrm{d}}x+\frac{1-\mu}{1+t}\displaystyle\int_{\R^n}
u(t)\partial_tu(t)  {\mathrm{d}}x.\nonumber
\end{eqnarray}
Therefore, exploiting  \eqref{E0}, \eqref{E1} and  the definition of $E_2(u(t),\partial_tu(t),t)$ given by \eqref{E3}, we  infer
\begin{eqnarray}\label{E11}
 \frac{d}{dt}E_2(u(t),\partial_tu(t),t)&=&-\frac{\mu}{2(1+t)} \displaystyle\int_{\R^n}
(\partial_tu(t))^2  {\mathrm{d}}x
-\frac{\mu}{2(1+t)}\displaystyle\int_{\R^n}
|\nabla u(t)|^2  {\mathrm{d}}x\nonumber\\
&&-\frac{\mu}{4(1+t)^3}\displaystyle\int_{\R^n}
(u(t))^2  {\mathrm{d}}x+\frac{\mu-\mu^2}{2(1+t)^2}\displaystyle\int_{\R^n}
u(t)\partial_tu(t)  {\mathrm{d}}x\nonumber\\
&&-\frac{\mu}{2(1+t)^2 }\Big(\displaystyle\int_{\R^n}
u(t)\partial_tu(t)  {\mathrm{d}}x+\frac1{2(1+t)}
\displaystyle\int_{\R^n}
(u(t))^2  {\mathrm{d}}x\Big).
\end{eqnarray}
From \eqref{E11} and by using the definition of $E_2(u(t),\partial_tu(t),t)$, we have
\begin{eqnarray}\label{E12}
 \frac{d}{dt}E_2(u(t),\partial_tu(t),t)+\frac{\mu}{1+t}E_2(u(t),\partial_tu(t),t)
+\frac{\mu(2-\mu)}{4(1+t)^3}\displaystyle\int_{\R^n}
(u(t))^2  {\mathrm{d}}x=0.
\end{eqnarray}
Multiplying \eqref{E12} by  $(1+t)^{\mu}$ and integrating over $[s, t]$, we deduce that, for all $t\ge s\ge0$,
 \begin{equation}\label{E14}
 (1+t)^{\mu} E_2(u(t),\partial_tu(t),t)+\frac{\mu (2-\mu)}{4}\int_{s}^{t}\frac{1}{(1+\tau)^{3-\mu}}\displaystyle\int_{\R^n}
(u(\tau))^2  {\mathrm{d}}x{\mathrm{d}}\tau=
 (1+s)^{\mu} E_2(u(s),\partial_tu(s),s).
\end{equation}
In addition, by using Young's inequality 
$$|ab|\leq \varepsilon a^2+\frac{b^2}{4\varepsilon}\qquad\hbox{with}\quad a=\frac{u}{1+t},\,\, b=u_t,\,\, \frac{\mu}{4}<\varepsilon < \frac{1}{2},$$ 
and the fact that  $\mu\in (0,2)$, there exists $C=C(\mu )$ such that  
\begin{equation} \label{E15}C^{-1} E_2(u(t),\partial_tu(t),t)\le
\displaystyle\int_{\R^n}\left(
\frac{(u(t))^2}{(t+1)^2}+
|\nabla
u(t)|^2+
(\partial_tu(t))^2
 \right) {\mathrm{d}}x\le C E_2(u(t),\partial_tu(t),t), \quad t \ge 0.
\end{equation}
Finally, using \eqref{E14}, and \eqref{E15},  we deduce that
for all $ t \ge s\ge 0$,
\begin{equation} \label{E16}
\displaystyle\int_{\R^n}\left(\frac{(u(t))^2}{(t+1)^2}+|\nabla
u(t)|^2+
(\partial_tu(t))^2
 \right) {\mathrm{d}}x\le C 
 \left(\frac{1+s}{1+t}\right)^{\mu}\displaystyle\int_{\R^n}\left(
 \frac{(u(s))^2}{(s+1)^2}
 + |\nabla
u(s)|^2+
(\partial_tu(s))^2 \right) {\mathrm{d}}x.
\end{equation}
This ends the proof of Lemma  \ref{LE}.
\end{proof}	
%

%

{\bf Second case:  $\mu >2$.} 
Now, we   introduce  the following functionals:
\begin{eqnarray}
E_3(u(t),\partial_tu(t),t):&=&\displaystyle\int_{\R^n}\big( u(t)\partial_tu(t)+ \frac{\mu-1}{2(t+1)}u^2(t)
 \big ) {\mathrm{d}}x,\quad \label{Inbis}\\
 E_4(u(t),\partial_tu(t),t):&=&E_0(u(t),\partial_tu(t),t)+\frac{1}{t+1}E_3(u(t),\partial_tu(t),t).\quad\label{E3bis}
\end{eqnarray}

\begin{lemma} \label{LEbis} Let $\mu >2$. 
Assume that $(u_0,u_1)\in H^{2}(\mathbb{R}^n)\times H^{1}(\mathbb{R}^n)$,  then the strong solution $u$ of \eqref{1} satisfies, for all   $t\ge s\ge0$,
\begin{equation} \label{A1bis}
\frac{\|u(t)\|^2_{L^2}}{(1+t)^{2}}
+	\|\partial_tu(t)\|^2_{L^2}+\|\nabla u(t)\|^2_{L^2}\leq C
 \left(\frac{1+s}{1+t}\right)^{2}
\left( \frac{\|u(s)\|^2_{L^2}}{(1+s)^{2}}
+	\|\partial_tu(s)\|^2_{L^2}+\|\nabla u(s)\|^2_{L^2}\right).
\end{equation}
\end{lemma}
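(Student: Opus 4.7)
The strategy mirrors the argument for $\mu\in(0,2)$: design $E_4$ so that $\frac{d}{dt}E_4 + \frac{2}{1+t}E_4$ is non-positive, then multiply by $(1+t)^2$ and integrate from $s$ to $t$. The choice of the coefficient $\frac{\mu-1}{2(t+1)}$ in the definition of $E_3$ and the weight $\frac{1}{t+1}$ in front of $E_3$ in $E_4$ is precisely tailored to make several terms cancel; one should first verify that this is the correct Ansatz by doing the differential computation.

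Concretely, I would first recall from \eqref{E0} that $\frac{d}{dt}E_0 = -\frac{\mu}{1+t}\|\partial_tu\|_{L^2}^2$. Then, multiplying \eqref{1} by $u(t)$ and integrating by parts yields
\begin{equation*}
\frac{d}{dt}E_3(u,\partial_tu,t) = \|\partial_tu\|_{L^2}^2 - \|\nabla u\|_{L^2}^2 - \frac{1}{1+t}\int_{\R^n}u\,\partial_tu\,dx - \frac{\mu-1}{2(1+t)^2}\|u\|_{L^2}^2,
\end{equation*}
after using the two contributions $\frac{\mu-1}{1+t}\int u\partial_tu$ (from differentiating the coefficient $\frac{\mu-1}{2(t+1)}$) and $-\frac{\mu}{1+t}\int u\partial_tu$ (from the damping) which combine to $-\frac{1}{1+t}\int u\partial_tu$. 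Differentiating $E_4 = E_0 + \frac{1}{t+1}E_3$ and collecting terms, one arrives at the identity
\begin{equation*}
\frac{d}{dt}E_4(u,\partial_tu,t) + \frac{2}{1+t}E_4(u,\partial_tu,t) = -\frac{\mu-2}{1+t}\|\partial_tu\|_{L^2}^2 \le 0,
\end{equation*}
where the sign is the place where $\mu>2$ is used crucially. The remaining step is routine: multiply this inequality by $(1+t)^2$ to obtain $\frac{d}{dt}\big[(1+t)^2 E_4(u,\partial_tu,t)\big] \le 0$, and integrate from $s$ to $t$.

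To convert the resulting decay of $E_4$ into the decay of $\frac{\|u\|_{L^2}^2}{(1+t)^2} + \|\partial_tu\|_{L^2}^2 + \|\nabla u\|_{L^2}^2$, I would use Young's inequality $|\frac{u\,\partial_tu}{1+t}| \le \varepsilon(\partial_tu)^2 + \frac{u^2}{4\varepsilon(1+t)^2}$ with $\varepsilon$ chosen in the interval $\bigl(\frac{1}{2(\mu-1)},\frac12\bigr)$, which is non-empty precisely because $\mu>2$ implies $\mu-1>1$. This gives two-sided bounds of the form $C^{-1}E_4 \le \frac{\|u\|_{L^2}^2}{(1+t)^2} + \|\partial_tu\|_{L^2}^2 + \|\nabla u\|_{L^2}^2 \le C\,E_4$, analogous to \eqref{E15}, and combining with the time-decay estimate for $E_4$ delivers \eqref{A1bis}.

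The main obstacle is the algebraic bookkeeping in computing $\frac{d}{dt}E_4 + \frac{2}{1+t}E_4$: the coefficient $\mu-1$ in $E_3$ and the weight $\frac{1}{t+1}$ in $E_4$ must be exactly right to cancel the $u\,\partial_tu$ and $u^2$ terms on both sides of the identity, leaving only a term of definite (negative) sign. Once the correct Ansatz is in place, the argument is a straightforward adaptation of the proof of Lemma~\ref{LE}; the key conceptual point is that, unlike in the case $\mu\in(0,2)$ where the decay rate is governed by the damping strength $\mu$, here the decay saturates at $(1+t)^{-2}$, reflecting the non-effective/effective damping threshold at $\mu=2$.
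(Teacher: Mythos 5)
Your proposal is correct and follows essentially the same route as the paper: the same functionals $E_3$, $E_4$, the same identity $\frac{d}{dt}E_4+\frac{2}{1+t}E_4=-\frac{\mu-2}{1+t}\|\partial_tu\|_{L^2}^2$, integration of $(1+t)^2E_4$, and the equivalence of $E_4$ with the standard energy via Young's inequality (your admissible range $\varepsilon\in\bigl(\tfrac{1}{2(\mu-1)},\tfrac12\bigr)$ is just the reciprocal parametrization of the paper's $\tfrac12<\varepsilon<\tfrac{\mu-1}{2}$). No gaps.
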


\begin{proof}	
By using a similar way as in Lemma \ref{LE}, we show that   $E_3(u(t),\partial_tu(t))$ is a differentiable function. Moreover, 
 by exploiting \eqref{1} and  integration by parts, we get  for all   $t>0$,
\begin{eqnarray}\label{E1bis}
 \frac{d}{dt}E_3(u(t),\partial_tu(t),t)&=&\displaystyle\int_{\R^n}
(\partial_tu(t))^2  {\mathrm{d}}x
-\displaystyle\int_{\R^n}
|\nabla u(t)|^2  {\mathrm{d}}x\\
&&-\frac{\mu-1}{2(1+t)^2}\displaystyle\int_{\R^n}
(u(t))^2  {\mathrm{d}}x
 -\frac1{t+1}\displaystyle\int_{\R^n} u(t)\partial_tu(t) {\mathrm{d}}x.\nonumber
\end{eqnarray}
Therefore, exploiting  \eqref{E0}, \eqref{E1bis} and  the definition of $E_4(u(t),\partial_tu(t),t)$ given by \eqref{E3bis}, we  infer
\begin{eqnarray}\label{E11bis}
 \frac{d}{dt}E_4(u(t),\partial_tu(t),t)&=&-\frac{\mu-1}{1+t} \displaystyle\int_{\R^n}
(\partial_tu(t))^2  {\mathrm{d}}x
-\frac{1}{1+t}\displaystyle\int_{\R^n}
|\nabla u(t)|^2  {\mathrm{d}}x\\
&&-\frac{\mu-1}{2(1+t)^3}\displaystyle\int_{\R^n}
(u(t))^2  {\mathrm{d}}x
 -\frac1{(t+1)^2}\displaystyle\int_{\R^n} u(t)\partial_tu(t) {\mathrm{d}}x
\nonumber\\
&&-\frac{1}{(1+t)^2 }\Big(\displaystyle\int_{\R^n}
u(t)\partial_tu(t)  {\mathrm{d}}x+\frac{\mu-1}{2(1+t)}
\displaystyle\int_{\R^n}
(u(t))^2  {\mathrm{d}}x\Big).\nonumber
\end{eqnarray}
From \eqref{E11bis} and \eqref{E3bis}, we have
\begin{eqnarray}\label{E12bis}
 \frac{d}{dt}E_4(u(t),\partial_tu(t),t)+\frac{2}{1+t}E_4(u(t),\partial_tu(t),t)
+\frac{\mu-2}{1+t}\displaystyle\int_{\R^n}
(\partial_tu(t))^2  {\mathrm{d}}x=0.
\end{eqnarray}
Multiplying \eqref{E12bis} by $(1+t)^{2}$, and integrating over $[s, t]$, we deduce that, for all $t\ge s\ge0$,
 \begin{equation}\label{E14bis}
 (1+t)^{2} E_4(u(t),\partial_tu(t),t)+(\mu -2)\int_{s}^{t}(1+\tau)\displaystyle\int_{\R^n}
(\partial_tu(\tau))^2  {\mathrm{d}}x{\mathrm{d}}\tau=
 (1+s)^{2} E_4(u(s),\partial_tu(s),s).
\end{equation}
Since $\mu>2$, using Young's inequality 
$$|ab|\leq \varepsilon a^2+\frac{b^2}{4\varepsilon}\qquad\hbox{with}\quad a=\frac{u}{1+t},\,\,b=u_t,\,\, \frac{1}{2}<\varepsilon<\frac{\mu-1}{2},$$ 
there exists $C=C(\mu )$ such that  
\begin{equation} \label{E15bis}C^{-1} E_4(u(t),\partial_tu(t),t)\le
\displaystyle\int_{\R^n}\left( \frac{(u(t))^2}{(t+1)^2}+
|\nabla
u(t)|^2+
(\partial_tu(t))^2
 \right) {\mathrm{d}}x\le C E_4(u(t),\partial_tu(t),t), \quad t \ge 0.
\end{equation}
Then, by using \eqref{E14bis}, and \eqref{E15bis},  we deduce that
for all $ t \ge s\ge 0$,
\begin{equation} \label{E16bis}
\displaystyle\int_{\R^n}\left( \frac{(u(t))^2}{(t+1)^2}+
|\nabla
u(t)|^2+
(\partial_tu(t))^2
 \right) {\mathrm{d}}x\le C 
 \left(\frac{1+s}{1+t}\right)^{2}\displaystyle\int_{\R^n}\left( \frac{(u(s))^2}{(s+1)^2}+
 |\nabla
u(s)|^2+
(\partial_tu(s))^2
 \right) {\mathrm{d}}x.
\end{equation}
This ends the proof of Lemma  \ref{LEbis}.
\end{proof}	

{\bf Third case:  $\mu=2$.} 
Let $\eta\in(0,2)$. Now, we   introduce  the following functionals:
\begin{eqnarray}
E_5(u(t),\partial_tu(t),t):&=&\displaystyle\int_{\R^n}\big( u(t)\partial_tu(t)+ \frac{1+\eta}{2(t+1)}u^2(t)
 \big ) {\mathrm{d}}x,\quad \label{Inbbis}\\
 E_6(u(t),\partial_tu(t),t):&=&E_0(u(t),\partial_tu(t),t)+\frac{2-\eta}{2(t+1)}E_5(u(t),\partial_tu(t),t).\quad\label{E3bbis}
\end{eqnarray}

\begin{lemma} \label{LEbbis} Let $\mu=2$ and $\eta\in(0,2)$. Assume that $(u_0,u_1)\in H^{2}(\mathbb{R}^n)\times H^{1}(\mathbb{R}^n)$,  then the strong solution $u$ of \eqref{1} satisfies, 
for all   $t\ge s\ge0$, 
\begin{equation} \label{A1bbis}
	\frac{\|u(t)\|^2_{L^2}}{(1+t)^{2}}
+	\|\partial_tu(t)\|^2_{L^2}+\|\nabla u(t)\|^2_{L^2}\leq C
 \left(\frac{1+s}{1+t}\right)^{2-\eta}
\left( 
	\frac{\|u(s)\|^2_{L^2}}{(1+s)^{2}}
+	\|\partial_tu(s)\|^2_{L^2}+\|\nabla u(s)\|^2_{L^2}\right).
\end{equation}
\end{lemma}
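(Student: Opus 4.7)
The plan is to follow the same template as the proofs of Lemmas \ref{LE} and \ref{LEbis}, with $E_6$ playing the role that $E_2$ and $E_4$ played before. The critical value $\mu=2$ is the borderline where the energy $E_0 + \frac{1}{1+t}E_3$ used in Lemma \ref{LEbis} would give exactly $(1+t)^{-2}$ decay but with a vanishing dissipative remainder, so the parameter $\eta \in (0,2)$ is introduced precisely as a perturbation allowing one to trade a small amount of decay for a strictly positive dissipation.

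First, Theorem \ref{existencetheorem} applied to data in $H^2\times H^1$ yields enough regularity for $E_5$ and $E_6$ to be $C^1$ in time. Multiplying \eqref{1} by $u$, integrating by parts, and using $\mu=2$, I would obtain an analogue of \eqref{E1} and \eqref{E1bis}:
\begin{equation*}
\frac{d}{dt}E_5(u(t),\partial_t u(t),t) = \int_{\R^n}(\partial_t u)^2\,\mathrm{d}x - \int_{\R^n}|\nabla u|^2\,\mathrm{d}x - \frac{1+\eta}{2(1+t)^2}\int_{\R^n} u^2\,\mathrm{d}x + \frac{\eta-1}{1+t}\int_{\R^n} u\,\partial_t u\,\mathrm{d}x.
\end{equation*}
Combining this with \eqref{E0} (which for $\mu=2$ reads $\frac{d}{dt}E_0 = -\frac{2}{1+t}\int(\partial_t u)^2\,\mathrm{d}x$) and with the derivative of the coefficient $\frac{2-\eta}{2(1+t)}$, after collecting terms I expect the $\int u\,\partial_t u\,\mathrm{d}x$ cross-terms to cancel exactly, leaving
\begin{equation*}
\frac{d}{dt}E_6 + \frac{2-\eta}{1+t}E_6 + \frac{\eta}{1+t}\int_{\R^n}(\partial_t u)^2\,\mathrm{d}x + \frac{\eta(2-\eta)(1+\eta)}{4(1+t)^3}\int_{\R^n} u^2\,\mathrm{d}x = 0.
\end{equation*}

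Since both remainder terms are non-negative for $\eta \in (0,2)$, multiplying by $(1+t)^{2-\eta}$ and integrating over $[s,t]$ gives $(1+t)^{2-\eta}E_6(t) \le (1+s)^{2-\eta}E_6(s)$. Finally, Young's inequality $|ab|\le \varepsilon a^2 + b^2/(4\varepsilon)$ applied with $a = u/(1+t)$, $b = \partial_t u$ and $\varepsilon$ chosen so that $\frac{(2-\eta)^2}{8\varepsilon} < \frac{(2-\eta)(1+\eta)}{4}$ (which is possible because $(1+\eta)(2-\eta) > 0$ for $\eta \in (0,2)$) establishes the two-sided bound $C^{-1}E_6 \le \int_{\R^n}\bigl(u^2/(1+t)^2 + |\nabla u|^2 + (\partial_t u)^2\bigr)\,\mathrm{d}x \le C E_6$, from which \eqref{A1bbis} follows as in \eqref{E16} and \eqref{E16bis}.

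The main obstacle is not conceptual but algebraic: verifying the exact cancellation of the $\int u\,\partial_t u\,\mathrm{d}x$ terms and confirming the positivity of the residual coefficients requires that the constants $1+\eta$ in $E_5$ and $2-\eta$ in $E_6$ be tuned compatibly. At $\eta=0$ the $(\partial_t u)^2$-dissipation vanishes and the derivation degenerates, which is precisely why the lemma produces only the $(1+t)^{-(2-\eta)}$ rate rather than $(1+t)^{-2}$, reflecting the genuine borderline nature of $\mu=2$.
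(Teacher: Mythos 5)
Your proposal is correct and follows essentially the same route as the paper: the identity $\frac{d}{dt}E_6+\frac{2-\eta}{1+t}E_6+\frac{\eta}{1+t}\int(\partial_tu)^2\,\mathrm{d}x+\frac{\eta(1+\eta)(2-\eta)}{4(1+t)^3}\int u^2\,\mathrm{d}x=0$ you predict is exactly \eqref{E12bbis}, and the rest (multiplying by $(1+t)^{2-\eta}$, dropping the nonnegative remainders, and the equivalence $C^{-1}E_6\le\int(u^2/(1+t)^2+|\nabla u|^2+(\partial_tu)^2)\,\mathrm{d}x\le CE_6$) matches \eqref{E14bbis}--\eqref{E16bbis}. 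The only nitpick is your stated smallness condition on $\varepsilon$ in the Young step, which is not quite the full pair of constraints (one needs both $\varepsilon>\frac{2-\eta}{4}$ and $\varepsilon<\frac{1+\eta}{2}$, compatible precisely because $\eta>0$; the paper simply takes $\varepsilon=\frac12$), but the conclusion you draw from it is correct.
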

\begin{proof}	
Assume that
$(u_0,u_1)\in H^{2}(\mathbb{R}^n)\times H^{1}(\mathbb{R}^n)$, then  thanks to Theorem \ref{existencetheorem} the solution $u$ of \eqref{1} satisfies 
$u\in \bigcap_{j=0}^{2}  \mathcal{C}^{j}\left([0,\infty),H^{2-j}(\mathbb{R}^n)\right).$
Therefore,
 $E_0(u(t),\partial_tu(t),t)$ is a differentiable  function in time. Moreover, by multiplying \eqref{1} by $\partial_tu(t)$, and  integration by parts, we get  for all   $t>0$,
\begin{equation}\label{E0bbis}
 \frac{d}{dt}E_0(u(t),\partial_tu(t),t)=-\frac{2}{1+t}\displaystyle\int_{\R^n}
(\partial_tu(t))^2  {\mathrm{d}}x.
\end{equation}
Similarly, 
  $E_5(u(t),\partial_tu(t),t)$ is a differentiable  function. In a similar way, by multiplying \eqref{1} by $u(t)$, and  integration by parts, we get  for all   $t>0$,
\begin{eqnarray}\label{E1bbis}
 \frac{d}{dt}E_5(u(t),\partial_tu(t),t)&=&\displaystyle\int_{\R^n}
(\partial_tu(t))^2  {\mathrm{d}}x
-\displaystyle\int_{\R^n}
|\nabla u(t)|^2  {\mathrm{d}}x\\
&&-\frac{1+\eta}{2(1+t)^2}\displaystyle\int_{\R^n}
(u(t))^2  {\mathrm{d}}x-\frac{1-\eta}{1+t}\displaystyle\int_{\R^n}
u(t)\partial_tu(t)  {\mathrm{d}}x.\nonumber
\end{eqnarray}
Therefore, exploiting  \eqref{E0bbis}, \eqref{E1bbis} and  the definition of $E_6(u(t),\partial_tu(t),t)$ given by \eqref{E3bbis}, we  infer
\begin{eqnarray}\label{E11bbis}
 \frac{d}{dt}E_6(u(t),\partial_tu(t),t)&=&-\frac{2+\eta}{2(1+t)} \displaystyle\int_{\R^n}
(\partial_tu(t))^2  {\mathrm{d}}x
-\frac{2-\eta}{2(1+t)}\displaystyle\int_{\R^n}
|\nabla u(t)|^2  {\mathrm{d}}x\\
&&-\frac{(2-\eta)(1+\eta)}{4(1+t)^3}\displaystyle\int_{\R^n}
(u(t))^2  {\mathrm{d}}x-\frac{(2-\eta)(1-\eta)}{2(1+t)^2}\displaystyle\int_{\R^n}
u(t)\partial_tu(t)  {\mathrm{d}}x\nonumber\\
&&-\frac{2-\eta}{2(1+t)^2 }\Big(\displaystyle\int_{\R^n}
u(t)\partial_tu(t)  {\mathrm{d}}x+\frac{1+\eta}{2(1+t)}
\displaystyle\int_{\R^n}
(u(t))^2  {\mathrm{d}}x\Big).\nonumber
\end{eqnarray}
By \eqref{E11bbis} and using the definition of $E_6(u(t),\partial_tu(t),t)$, we have
\begin{eqnarray}\label{E12bbis}
&& \frac{d}{dt}E_6(u(t),\partial_tu(t),t)+\frac{2-\eta}{1+t}E_6(u(t),\partial_tu(t),t)\nonumber\\
&&\,\,+\frac{\eta(1+\eta)(2-\eta)}{4(1+t)^3}\displaystyle\int_{\R^n}
(u(t))^2  {\mathrm{d}}x+\frac{\eta}{1+t}\displaystyle\int_{\R^n}
(\partial_tu(t))^2  {\mathrm{d}}x=0.
\end{eqnarray}
Multiplying \eqref{E12bbis} by  $(1+t)^{2-\eta}$, and integrating over $[s, t]$, we deduce that, for all $t\ge s\ge0$,
\begin{eqnarray}\label{E14bbis}
 (1+t)^{2-\eta} E_6(u(t),\partial_tu(t),t)+\frac{\eta(1+\eta)(2-\eta)}{4}\int_{s}^{t}\frac{1}{(1+\tau)^{3-\mu}}\displaystyle\int_{\R^n}
(u(\tau))^2  {\mathrm{d}}x{\mathrm{d}}\tau\nonumber\\
+\eta\int_{s}^{t}(1+\tau)^{1-\eta}\displaystyle\int_{\R^n}
(\partial_\tau u(\tau))^2  {\mathrm{d}}x{\mathrm{d}}\tau=
 (1+s)^{2-\eta} E_6(u(s),\partial_tu(s),s).
\end{eqnarray}
In addition, by using Young's inequality 
$$|ab|\leq \frac{a^2}{2}+\frac{b^2}{2}\qquad \hbox{with}\quad a=\frac{u}{1+t},\,\, b=u_t,$$
and the fact that $\eta\in (0,2)$, there exists $C=C(\eta )$ such that  
\begin{equation} \label{E15bbis}C^{-1} E_6(u(t),\partial_tu(t),t)\le
\displaystyle\int_{\R^n}\left(
\frac{(u(t))^2}{(t+1)^2}+
|\nabla
u(t)|^2+
(\partial_tu(t))^2
 \right) {\mathrm{d}}x\le C E_6(u(t),\partial_tu(t),t), \quad t \ge 0.
\end{equation}
Finally, using \eqref{E14bbis}, and \eqref{E15bbis},  we deduce that
for all $ t \ge s\ge 0$,
\begin{equation} \label{E16bbis}
\displaystyle\int_{\R^n}\left(\frac{(u(t))^2}{(t+1)^2}+|\nabla
u(t)|^2+
(\partial_tu(t))^2
 \right) {\mathrm{d}}x\le C 
 \left(\frac{1+s}{1+t}\right)^{2-\eta}\displaystyle\int_{\R^n}\left(
 \frac{(u(s))^2}{(s+1)^2}
 + |\nabla
u(s)|^2+
(\partial_tu(s))^2 \right) {\mathrm{d}}x.
\end{equation}
This ends the proof of Lemma  \ref{LEbbis}.
\end{proof}		
Let us denote by $R(t,s)$ the operator which maps the initial data $(u(s),u_t(s))\in
H^{m+2}(\mathbb{R}^n)\times H^{m+1}(\mathbb{R}^n)$ given at the time $s\geq0$ to the solution $u(t)\in
H^{m+2}(\mathbb{R}^n)$ at the time $t \geq s$, i.e. the solution $u$ of (\ref{1})
is defined by $u(t)=R(t,0)(u_0,u_1)$.  We also write $S(t,s)g=R(t,s)(0,g)$ for a function $g\in H^{m+1}(\mathbb{R}^n)$. Finally, if $(u_0,u_1)\in H^{1}(\mathbb{R}^n)\times L^{2}(\mathbb{R}^n)$, the function $t\rightarrow R(t)(u_0,u_1)$, which is defined in the sense of \eqref{Extend.Operator1} below, is called a``generalized
solution" of the initial value problem \eqref{1} which we will call a {\bf mild} solution.

\begin{proposition}\label{prop2.1}
	Let $n\geq 1$. If $(u_0,u_1)\in H^{1}(\mathbb{R}^n)\times L^{2}(\mathbb{R}^n)$, then there exists a unique {\bf mild} solution 
	$$	u\in  \mathcal{C}\left([0,\infty),H^1(\mathbb{R}^n)\right)\cap \mathcal{C}^{1}\left([0,\infty),L^2(\mathbb{R}^n)\right),$$
	 of \eqref{1} satisfies
\begin{equation} \label{0A1}
\frac{\|u(t)\|^2_{L^2}}{
	(1+t)^{2}}
+	\|\partial_tu(t)\|^2_{L^2}+\|\nabla u(t)\|^2_{L^2}\leq C
 \left(\frac{1+s}{1+t}\right)^{\alpha_0}
\left( \frac{\|u(s)\|^2_{L^2}}{
	(1+s)^{2}}
+	\|\partial_tu(s)\|^2_{L^2}+\|\nabla u(s)\|^2_{L^2}\right),
\end{equation}
for  all  
  $t\ge s\ge0$, where $\alpha_0$ is defined in \eqref{alpha0}. In addition,  if $(u_0,u_1)\in H^{2}(\mathbb{R}^n)\times H^{1}(\mathbb{R}^n)$, then $u$ is a {\bf strong} solution and satisfies
\begin{equation}\label{5mai1}
\|\nabla \partial_tu(t)\|^2_{L^2}+\|\Delta u(t)\|^2_{L^2}\leq C
 \left(\frac{1+s}{1+t}\right)^{\alpha_0}
\left( 	\|\nabla \partial_tu(s)\|^2_{L^2}+\|\nabla u(s)\|^2_{H^1}\right),
\end{equation}
for  all  
  $t\ge s\ge0$. Furthermore, if $(u_0,u_1)\in H^{3}(\mathbb{R}^n)\times H^{2}(\mathbb{R}^n)$, 
 then $u$ satisfies
\begin{equation}\label{5mai5}
	\|\partial_t\Delta u(t)\|^2_{L^2}+\|\nabla \Delta u(t)\|^2_{L^2}\leq C
 \left(\frac{1+s}{1+t}\right)^{\alpha_0}
\left( 	\|\partial_t\Delta u(s)\|^2_{L^2}+\| \Delta u(s)\|^2_{H^1}\right),
\end{equation}
 for  all  
  $t\ge s\ge0$.
\end{proposition}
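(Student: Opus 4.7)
The plan is to deduce the three estimates of Proposition 2.1 from the strong-solution energy bounds already established in Lemmas \ref{LE}, \ref{LEbis}, \ref{LEbbis}, extending first to mild solutions by density and then bootstrapping to higher regularity by commuting spatial derivatives with the equation. Observe first that the three lemmas can be packaged into the single strong-solution statement
\begin{equation*}
\frac{\|u(t)\|^2_{L^2}}{(1+t)^2}+\|\partial_tu(t)\|^2_{L^2}+\|\nabla u(t)\|^2_{L^2}\le C\left(\frac{1+s}{1+t}\right)^{\alpha_0}\!\left(\frac{\|u(s)\|^2_{L^2}}{(1+s)^2}+\|\partial_tu(s)\|^2_{L^2}+\|\nabla u(s)\|^2_{L^2}\right),
\end{equation*}
valid whenever $(u_0,u_1)\in H^2\times H^1$, with $\alpha_0$ exactly as in \eqref{alpha0}. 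So the estimate \eqref{0A1} is already known at the $H^2\times H^1$ regularity level.

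To produce the mild solution and prove \eqref{0A1} at the $H^1\times L^2$ level, I would approximate $(u_0,u_1)\in H^1\times L^2$ by a sequence $(u_0^k,u_1^k)\in H^2\times H^1$ converging in $H^1\times L^2$, and let $u^k$ be the strong solutions furnished by Theorem \ref{existencetheorem}. Linearity of \eqref{1} lets me apply the above estimate to the difference $u^k-u^\ell$ with $s=0$, yielding that $\{(u^k,\partial_tu^k)\}$ is Cauchy in $\mathcal{C}([0,T];H^1\times L^2)$ for every $T>0$. The limit $u$ is declared the mild solution; the estimate \eqref{0A1} follows by passing $k\to\infty$ in the strong-solution bound applied to $u^k$, and uniqueness is immediate from the same estimate applied to a difference of two mild solutions with equal data.

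For \eqref{5mai1} and \eqref{5mai5}, the key point is that since the coefficients of \eqref{1} depend only on $t$, spatial derivatives commute with the solution operator $R(t,s)$: if $u$ solves \eqref{1} and $(u_0,u_1)\in H^{m+2}\times H^{m+1}$, then for each multi-index $|\beta|\le m+1$, $v=\partial_x^{\beta}u$ is itself the mild solution of \eqref{1} with data $(\partial_x^{\beta}u_0,\partial_x^{\beta}u_1)\in H^1\times L^2$. For \eqref{5mai1}, I apply \eqref{0A1} to each $v_i=\partial_{x_i}u$ (with $(u_0,u_1)\in H^2\times H^1$), sum in $i=1,\dots,n$, discard the nonnegative $\|\nabla u(t)\|^2_{L^2}/(1+t)^2$ term on the left, absorb the corresponding quantity at time $s$ into $\|\nabla u(s)\|^2_{H^1}$, and use the Plancherel identity $\|\Delta u\|_{L^2}=\|\nabla^2 u\|_{L^2}$ to recognize the left-hand side. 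For \eqref{5mai5}, iterate this procedure with $(u_0,u_1)\in H^3\times H^2$: apply \eqref{5mai1} to each $v_i=\partial_{x_i}u$, sum over $i$, and again rewrite mixed second derivatives via Plancherel as $\|\partial_t\Delta u\|_{L^2}$ and $\|\nabla\Delta u\|_{L^2}$.

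The only subtle step is the justification that $\partial_x^{\beta}u$ is indeed the mild solution generated by the differentiated data; this reduces to the density argument of the previous paragraph applied to mollified initial data, where the commutation is trivial for smooth strong solutions and passes to the limit via the very estimate being proved. The rest is routine: linear algebra of indices, the Plancherel identifications between $\|\nabla^k u\|_{L^2}$ and $\|\Delta^{k/2}u\|_{L^2}$ for $k=2,3$ on $\mathbb{R}^n$, and careful bookkeeping so that the right-hand norms at time $s$ collapse into $\|\nabla\partial_tu(s)\|^2_{L^2}+\|\nabla u(s)\|^2_{H^1}$ and $\|\partial_t\Delta u(s)\|^2_{L^2}+\|\Delta u(s)\|^2_{H^1}$ respectively.
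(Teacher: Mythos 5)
Your proposal is correct and follows essentially the same route as the paper: a density argument from the strong-solution energy lemmas to build the mild solution and obtain \eqref{0A1}, followed by commuting spatial derivatives with the equation to get \eqref{5mai1} and \eqref{5mai5}. The only (immaterial) difference is that for \eqref{5mai5} you iterate the first-derivative step and sum, whereas the paper applies \eqref{0A1} directly to $W=\Delta u$; both yield the same bound.
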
	
\begin{proof}
{\bf Existence and Uniqueness.} Let $T_0>0$ an arbitrary number, and let $(u_0,u_1)\in H^{1}(\mathbb{R}^n)\times L^{2}(\mathbb{R}^n)$. By the density argument, there exist sequences 
$$\left\{(u_0^{(j)},u_1^{(j)})\right\}_{j=1}^\infty\subseteq
H^2(\mathbb{R}^n)\times H^1(\mathbb{R}^n),$$
such that
$$\lim_{j\rightarrow\infty}(u_0^{(j)},u_1^{(j)})=(u_0,u_1)\;\;\mbox{in}\;\; H^1(\mathbb{R}^n)\times L^2(\mathbb{R}^n).$$
Using Theorem \ref{existencetheorem}, let $u^{(j)}$ be the strong solution of the linear homogeneous equation \eqref{1} with the initial data $(u_0^{(j)},u_1^{(j)})$. Then, the difference $u^{(j)}-u^{(k)}$ is a strong solution of the Cauchy problem 
$$
\begin{cases}
\displaystyle \partial_{t}^2u-\Delta u +\frac{\mu}{1+t} \partial_{t}u =0, &x\in \mathbb{R}^n,\,t>0,\\\\
u(x,0)= u^{(j)}_0(x)-u^{(k)}_0(x),\, \partial_{t}u(x,0)=  u_1^{(j)}(x)-u_1^{(k)}(x),&x\in \mathbb{R}^n.\\
\end{cases}
$$
Apply Lemmas \ref{LE}, \ref{LEbis}, and \ref{LEbbis} to $u^{(j)}-u^{(k)}$, we have in particular
\begin{eqnarray*}
&&\|u^{(j)}-u^{(k)}\|^2_{L^2}+\|\partial_t(u^{(j)}-u^{(k)})\|^2_{L^2}+\|\nabla (u^{(j)}-u^{(k)})\|^2_{L^2}\\
&&\leq C (1+T_0)^{2-\alpha_0}\left( \|u_0^{(j)}-u_0^{(k)}\|^2_{L^2}+\|u_1^{(j)}-u_1^{(k)}\|^2_{L^2}+\|\nabla (u_0^{(j)}-u_0^{(k)})\|^2_{L^2}\right),
\end{eqnarray*}
for  all  $t\in [0,T_0]$, where $\alpha_0$ is defined in \eqref{alpha0}.  This shows that $\left\{u^{(j)}\right\}_{j=1}^\infty$ is a Cauchy sequence in the complete space $C([0,T_0];H^1(\mathbb{R}^n))\cap C^1([0,T_0];L^2(\mathbb{R}^n))$. Therefore, we can define the limit 
\begin{equation}\label{eq2.9}
\lim_{j\rightarrow\infty}u^{(j)}=u\in C([0,\infty);H^1(\mathbb{R}^n))\cap C^1([0,\infty);L^2(\mathbb{R}^n)),
\end{equation}
since $T_0>0$ is arbitrary. As $u^{(j)}$ satisfies $u^{(j)}(t,x)=R(t)(u^{(j)}_0,u^{(j)}_1)$, then
$$
u(t)=\lim_{j\rightarrow\infty} R(t)(u^j_0,u^j_1),
$$
this means that the operator $R(t)$ has been extended uniquely to a new operator
	\begin{align}\label{Extend.Operator1}
	\widetilde{R}(t)&:\,H^1(\mathbb{R}^n)\times L^2(\mathbb{R}^n)\,\longrightarrow\, X_0\\
	&\qquad\quad(u_0,u_1)\qquad\longmapsto u(t)\notag
	\end{align}
	also denoted by $R(t)$, where
	$$X_0:=\mathcal{C}\left([0,\infty),H^1(\mathbb{R}^n)\right)\cap
	\mathcal{C}^1\left([0,\infty),L^2(\mathbb{R}^n)\right),$$
which indicates that $u(t)=R(t)(u_0,u_1)$ is a unique mild solution of \eqref{1}.\\ 
{\bf Energy estimate \eqref{0A1}.} By Lemmas \ref{LE}, \ref{LEbis}, and \ref{LEbbis}, each strong solution $u^{(j)}$ constructed above satisfies the estimates \eqref{A1}, \eqref{A1bis}, \eqref{A1bbis}. By letting $j\rightarrow\infty$ and using (\ref{eq2.9}), the same estimates hold for the mild solution $u$, and we deduce that estimate \eqref{0A1} holds.\\
{\bf Energy estimate \eqref{5mai1}.} Let $1\le i\le n$. A straightforward computation implies that $U_i=\partial_i u$, satisfies
\begin{equation}\label{ut}
\begin{cases}
\displaystyle\partial_{t}^2U_i-\Delta U_i +\frac{\mu}{1+t} \partial_{t}U_i=0, &x\in \mathbb{R}^n,\,t>0,\\
U_i(x,0)=  \partial_iu_0(x),\,\partial_{t}U_i(x,0)=  \partial_iu_1(x),& x\in \mathbb{R}^n.\\
\end{cases}
\end{equation}
Note that 
$(u_0,u_1)\in H^{2}(\mathbb{R}^n)\times H^{1}(\mathbb{R}^n)$, then
$(\partial_iu_0,\partial_iu_1)\in H^{1}(\mathbb{R}^n)\times L^{2}(\mathbb{R}^n)$. By \eqref{0A1}, the mild solution $U_i$ of \eqref{ut} satisfies, for  all $t\ge s\ge0$,
\begin{equation} \label{5mai2}
\frac{\|U_i(t)\|^2_{L^2}}{
	(1+t)^{2}}
+	\|\partial_t U_i(t)\|^2_{L^2}+\|\nabla U_i(t)\|^2_{L^2}\leq C
 \left(\frac{1+s}{1+t}\right)^{\alpha_0}
\left( \frac{\|U_i(s)\|^2_{L^2}}{
	(1+s)^{2}}
+	\|\partial_tU_i(s)\|^2_{L^2}+\|\nabla U_i(s)\|^2_{L^2}\right).
\end{equation}
  Summing-up the estimate  \eqref{5mai2} in $i$,  and using the fact $U_i=\partial_i u$, we write
\begin{equation} \label{5mai3}
\frac{\|\nabla u(t)\|^2_{L^2}}{
	(1+t)^{2}}+\|\nabla \partial_tu(t)\|^2_{L^2}+\|\Delta u(t)\|^2_{L^2}\leq C
 \left(\frac{1+s}{1+t}\right)^{\alpha_0}
\left( \frac{\|\nabla u(s)\|^2_{L^2}}{
	(1+s)^{2}}+
	\|\nabla \partial_tu(s)\|^{2}_{L^2}+\|\Delta u(s)\|^2_{L^2}\right),
\end{equation}
which ends the proof of the estimate \eqref{5mai1}.\\
{\bf Energy estimate \eqref{5mai5}.} If the initial data $(u_0,u_1)\in H^{3}(\mathbb{R}^n)\times H^{2}(\mathbb{R}^n)$, by letting $W=\Delta u$, we deduce
\begin{equation}\label{utt}
\begin{cases}
\displaystyle\partial_{t}^2W-\Delta W +\frac{\mu}{1+t} \partial_{t}W =0, &x\in \mathbb{R}^n,\,t>0,\\
W(x,0)=   W_0(x),\,\partial_tW(x,0)=  W_1(x),&x\in \mathbb{R}^n,\\
\end{cases}
\end{equation}
where $W_0(x)=   \Delta u_0(x),$ and $W_1(x)=  \Delta u_1(x).$
Similarly, by applying \eqref{0A1} to the mild solution $W(t)=\Delta u(t)$ of \eqref{utt}, we conclude, for  all  
  $t\ge s\ge0$, that
\begin{equation} \label{6mai1}
\frac{\|\Delta u(t)\|^2_{L^2}}{
	(1+t)^{2}}
+	\|\partial_t\Delta u(t)\|^2_{L^2}+\|\nabla \Delta u(t)\|^2_{L^2}\leq C
 \left(\frac{1+s}{1+t}\right)^{\alpha_0}
\left( \frac{\|\Delta u(s)\|^2_{L^2}}{
	(1+s)^{2}}
+	\|\partial_t\Delta u(s)\|^2_{L^2}+\|\nabla \Delta u(s)\|^2_{L^2}\right).
\end{equation}
Clearly, we deduce that estimate \eqref{5mai5} holds.
 This concludes the proof of Proposition \ref{prop2.1}.
\end{proof}
\begin{remark}\label{rmk2.1}
	Similarly to the existence part in the proof of Proposition \ref{prop2.1} and using Lemmas \ref{LE}, \ref{LEbis}, \ref{LEbbis}, the operator $R(t,s)$ and $S(t,s)$ can be
	extended uniquely such that 
	\begin{align}\label{Extend.Operator}
	R(t,s):\,H^1(\mathbb{R}^n)\times L^2(\mathbb{R}^n)\,\longrightarrow\, X_s
	\end{align}
	where
	$$X_s:=\mathcal{C}\left([s,\infty),H^1(\mathbb{R}^n)\right)\cap
	\mathcal{C}^1\left([s,\infty),L^2(\mathbb{R}^n)\right).$$
\end{remark}

\section{Main Result}\label{sec-main}\label{sec3}

The aim of this  section is  to state our main result for which  we will write  the definition of mild solution of the system (\ref{NLW}). Recall that $f(\partial_tu)=|\partial_tu|^p $ or $|\partial_tu|^{p-1}\partial_tu$.
\begin{definition}(Mild solution)\\ 
Let $1\leq n\leq 3$. Assume that 
$$(u_0,u_1)\in H^{m+2}(\mathbb{R}^n)\times H^{m+1}(\mathbb{R}^n),\qquad\hbox{with} \quad m=\left\{\begin{array}{ll}0&\quad \hbox{when}\,\,n=1,\\\\
  1&\quad \hbox{when}\,\,n=2,3.\\
  \end{array}
  \right.
  $$ We say that a function $u$ is a
mild solution of (\ref{NLW}) if
$$u\in \mathcal{C}^1([0,T],H^{m+1}(\mathbb{R}^n)),$$
and $u$ has the initial data $u(0)=u_0$, $u_t(0)=u_1$, and satisfies
the integral equation
 \begin{equation}\label{mildsolution}
u(t,x)=R(t)(u_0,u_1)+\int_0^tS(t,s)f(\partial_tu)\,ds,
\end{equation} 
in the sense of $H^{m+1}(\mathbb{R}^n)$.
\end{definition}

If $T<\infty$,
then $u$ is called a local-in-time mild  solution, and if $T=\infty$, then $u$ is called a global-in-time  mild
solution or global mild solution.

In the following, we state the  main results of this paper.
\begin{theorem}$(\hbox{Global existence: Case of $n=1$})$\label{globalexistence1}\\ 
Assume that  $u_0 \in H^2(\R)$ and $u_1 \in H^1(\R)$.
If $$p>1+\frac{2}{\alpha},$$
 where $\alpha= \min (2,\mu)$, there exists a positive constant $0<\varepsilon_0\ll1$, small enough, such that for any initial data satisfying 
$$\|u_0\|_{H^2}+\|u_1\|_{H^1}\leq\varepsilon_0,$$
 there is a uniquely global mild solution 
$$u\in  \mathcal{C}([0,\infty),H^2(\mathbb{R}))\cap   \mathcal{C}^1([0,\infty),H^1(\mathbb{R}))\cap \mathcal{C}^2([0,\infty),L^2(\mathbb{R})).$$
Moreover, the solution satisfies the following estimates
$$(1+t)^{-1}\|u(t)\|_{L^2}+
\|\nabla u(t)\|_{H^1}+\| \partial_t u(t)\|_{H^1}\leq C\,(1+t)^{-\frac{\alpha_0}{2}},$$
where $\alpha_0$ is given in \eqref{alpha0}.
\end{theorem}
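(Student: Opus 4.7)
The plan is to set up a Banach fixed-point argument based on the Duhamel representation \eqref{mildsolution}. Pick
$$\alpha_0 := \mu \text{ if } \mu < 2, \quad \alpha_0 := 2 \text{ if } \mu > 2, \quad \alpha_0 := 2-\eta \text{ if } \mu = 2,$$
where in the borderline case $\eta>0$ is taken small enough (depending on $p$) so that $\alpha_0(p-1)>2$. Under the standing hypothesis $p>1+2/\alpha$ with $\alpha=\min(\mu,2)\leq 2$ one automatically has $p>2$, which will let the contraction step go through without extra care. Introduce
$$\|u\|_X := \sup_{t\geq 0}(1+t)^{\alpha_0/2}\Big[(1+t)^{-1}\|u(t)\|_{L^2} + \|\nabla u(t)\|_{H^1} + \|\partial_t u(t)\|_{H^1}\Big],$$
let $X$ be the Banach space of functions $u\in\mathcal{C}([0,\infty),H^2(\R))\cap\mathcal{C}^1([0,\infty),H^1(\R))$ with $\|u\|_X<\infty$, and seek the mild solution as a fixed point of $\Phi(u)(t):=R(t)(u_0,u_1)+\int_0^t S(t,s)f(\partial_tu)(s)\,ds$ on a closed ball $B_M\subset X$.

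The linear piece obeys $\|R(\cdot)(u_0,u_1)\|_X\leq C\varepsilon_0$ directly from estimates \eqref{0A1} and \eqref{5mai1} of Proposition \ref{prop2.1}. For the Duhamel integral, applying the same estimates to the propagator $R(t,s)$ evaluated on data $(0,g)$ at time $s$ (cf.\ Remark \ref{rmk2.1}) yields, for every $g\in H^1(\R)$,
$$(1+t)^{-1}\|S(t,s)g\|_{L^2}+\|\nabla S(t,s)g\|_{H^1}+\|\partial_tS(t,s)g\|_{H^1}\leq C\left(\frac{1+s}{1+t}\right)^{\alpha_0/2}\|g\|_{H^1}.$$
The key nonlinear estimate exploits the one-dimensional Sobolev embedding $H^1(\R)\hookrightarrow L^\infty(\R)$: since $f\in\mathcal{C}^1(\R)$ with $|f(v)|\leq|v|^p$ and $|f'(v)|\leq p|v|^{p-1}$,
$$\|f(\partial_tu)(s)\|_{H^1}\leq C\|\partial_tu(s)\|_{L^\infty}^{p-1}\|\partial_tu(s)\|_{H^1}\leq C\|\partial_tu(s)\|_{H^1}^p\leq CM^p(1+s)^{-p\alpha_0/2}.$$
Inserting this into the Duhamel bound and invoking $\alpha_0(p-1)/2>1$ to estimate $\int_0^t(1+s)^{-\alpha_0(p-1)/2}\,ds\leq C$, one gets $\|\Phi(u)\|_X\leq C\varepsilon_0+CM^p$, so that $\Phi(B_M)\subset B_M$ provided $M=2C\varepsilon_0$ and $\varepsilon_0$ is small enough.

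For the contraction estimate, $p>2$ supplies the pointwise bound $|f'(a)-f'(b)|\leq C(|a|^{p-2}+|b|^{p-2})|a-b|$, which combined with $|f(a)-f(b)|\leq C(|a|^{p-1}+|b|^{p-1})|a-b|$ and another use of $H^1(\R)\hookrightarrow L^\infty(\R)$ produces
$$\|f(\partial_tu)-f(\partial_tv)\|_{H^1}\leq C\big(\|\partial_tu\|_{H^1}^{p-1}+\|\partial_tv\|_{H^1}^{p-1}\big)\|\partial_t(u-v)\|_{H^1}.$$
The same Duhamel and integrability estimates then yield $\|\Phi(u)-\Phi(v)\|_X\leq CM^{p-1}\|u-v\|_X\leq\frac12\|u-v\|_X$ for $M$ small. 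Banach's theorem produces a unique $u\in B_M$ with $u=\Phi(u)$, and the claimed decay is built into $\|u\|_X\leq M$. The regularity $u\in\mathcal{C}^2([0,\infty),L^2(\R))$ follows a posteriori by reading $\partial_t^2 u=\Delta u-\frac{\mu}{1+t}\partial_tu+f(\partial_tu)$ off the equation and checking continuity in $L^2(\R)$.

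I expect the main technical obstacle to lie in the borderline regime $\mu=2$: Lemma \ref{LEbbis} only delivers the weakened decay $(1+t)^{-(2-\eta)/2}$ for an arbitrary $\eta>0$, so one must verify that $\eta$ can be chosen small enough to preserve $\alpha_0(p-1)>2$. This is possible precisely because the strict inequality $p>1+2/\alpha=2$ leaves exactly the room needed to absorb the loss caused by the $\eta$-perturbation, but it requires careful bookkeeping of the $\eta$-dependence of constants throughout the estimates.
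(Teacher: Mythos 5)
Your proposal is correct and follows essentially the same route as the paper: the same weighted norm $\sup_t(1+t)^{\alpha_0/2}[(1+t)^{-1}\|u\|_{L^2}+\|\nabla u\|_{H^1}+\|\partial_t u\|_{H^1}]$, the same Duhamel bounds derived from \eqref{0A1} and \eqref{5mai1} applied to data $(0,f(\partial_t u(s)))$, the same use of $H^1(\mathbb{R})\hookrightarrow L^\infty(\mathbb{R})$ to get $\|f(\partial_t u)\|_{H^1}\lesssim M^p(1+s)^{-p\alpha_0/2}$, the same integrability condition $(p-1)\alpha_0/2>1$ secured in the borderline case $\mu=2$ by taking $0<\eta<2(p-2)/(p-1)$, and the same contraction estimates relying on $p>2$. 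Your explicit treatment of the $\mathcal{C}^2([0,\infty),L^2)$ regularity by reading $\partial_t^2u$ off the equation is a small addition the paper leaves implicit, but it does not constitute a different approach.
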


\begin{remark}
We emphasize once again that
$$p_G(1,\mu)= 1+\frac{2}{\mu}$$
is the predicted critical exponent in the one-dimensional case when $\mu \le 2$. We conjecture that this observation may also hold for equations similar to \eqref{NLW}, including those with Tricomi-type or mass terms, as explored in \cite{Our8, Our5}, or for coupled wave systems studied in \cite{Our4, Our9}, where certain blow-up results have been established.
\end{remark}

\begin{theorem}$(\hbox{Global  existence: Case of $n=2,3$})$\label{globalexistence2}\\ 
Let $n=2,3$.  Assume that  $u_0 \in H^3(\R^n)$ and $u_1 \in H^2(\R^n)$.  If 
\begin{equation}\label{O}\begin{cases}
p>1+\frac{2}{\mu},&\,\,\hbox{if}\,\,\mu\leq 1,\\\\
p\geq 3,&\,\,\hbox{if}\,\, \mu>1,
\end{cases}
\end{equation}
then there exists a positive constant $0<\varepsilon_0\ll1$, small enough, such that for any initial data satisfying 
$$\|u_0\|_{H^3}+\|u_1\|_{H^2}\leq\varepsilon_0,
$$
 there is a uniquely global mild solution 
$$u\in  \mathcal{C}([0,\infty),H^3(\mathbb{R}^n))\cap   \mathcal{C}^1([0,\infty),H^2(\mathbb{R}^n)).
$$
Moreover, the solution satisfies the following estimates
$$(1+t)^{-1}\|u(t)\|_{L^2}+\|\nabla u(t)\|_{H^2}+
\|\partial_t u(t)\|_{H^2}\leq C\,(1+t)^{-\frac{\alpha_0}{2}},$$
where $\alpha_0$ is given in \eqref{alpha0}.
\end{theorem}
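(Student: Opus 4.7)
The plan is to solve the integral equation \eqref{mildsolution} via a Banach fixed point argument in a weighted space carrying the decay encoded in Proposition \ref{prop2.1}. Define
\[
\|u\|_X := \sup_{t\ge 0}\,(1+t)^{\alpha_0/2}\Big((1+t)^{-1}\|u(t)\|_{L^2}+\|\nabla u(t)\|_{H^2}+\|\partial_t u(t)\|_{H^2}\Big),
\]
with $X$ the corresponding Banach space. The natural map is
\[
\Phi(u)(t) := R(t)(u_0,u_1)+\int_0^t S(t,s)\,f(\partial_t u(s))\,ds,
\]
and I aim to show that $\Phi$ is a contraction on the ball $B_M=\{u\in X:\|u\|_X\le M\}$ for some $M\sim\varepsilon_0$.

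For the linear piece, applying \eqref{0A1}, \eqref{5mai1}, and \eqref{5mai5} of Proposition \ref{prop2.1} simultaneously yields $\|R(t)(u_0,u_1)\|_X\le C(\|u_0\|_{H^3}+\|u_1\|_{H^2})\le C\varepsilon_0$. The heart of the argument is the nonlinear estimate. Since $n\le 3$ one has the continuous embedding $H^2(\R^n)\hookrightarrow L^\infty(\R^n)$, and for $p\ge 3$ the function $f$ is $C^2$ with $|f^{(k)}(v)|\lesssim |v|^{p-k}$ for $k=0,1,2$. Expanding $\partial_i^2 f(\partial_t u)$ by the chain rule and controlling the quadratic gradient term via a Gagliardo--Nirenberg inequality $\|\nabla\partial_t u\|_{L^4}^2\lesssim \|\partial_t u\|_{H^2}^2$ (valid for $n\le 3$), I expect
\[
\|f(\partial_t u(s))\|_{H^2}\lesssim \|\partial_t u(s)\|_{L^\infty}^{p-1}\|\partial_t u(s)\|_{H^2}\lesssim \|u\|_X^{\,p}(1+s)^{-p\alpha_0/2}.
\]
Plugging this into the decay estimates for $S(t,s)$ coming from Proposition \ref{prop2.1}, the problem reduces to bounding
\[
(1+t)^{\alpha_0/2}\int_0^t (1+s)^{(1-p)\alpha_0/2}\,ds\le C,
\]
which holds uniformly in $t$ precisely when $(p-1)\alpha_0/2>1$, i.e.\ $p>1+2/\alpha_0$. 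This rewrites as $p>1+2/\mu$ in the regime $\mu\le 1$, and is a consequence of $p\ge 3$ together with $\alpha_0>1$ (up to the harmless $\eta$-loss at $\mu=2$) in the regime $\mu>1$. Hence $\Phi(B_M)\subset B_M$ for $\varepsilon_0$ small.

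For the contraction step I use Lipschitz-type bounds on $f$, $f'$ and $f''$, valid exactly for $p\ge 3$, to estimate $\nabla^2(f(\partial_t u)-f(\partial_t v))$ by expressions of the form $(|\partial_t u|+|\partial_t v|)^{p-1}$ times second-order differences of $\partial_t u-\partial_t v$. Rerunning the same time-integral computation then yields $\|\Phi(u)-\Phi(v)\|_X\le CM^{p-1}\|u-v\|_X$, a strict contraction on $B_M$ once $M$ is small. The unique fixed point is the claimed global mild solution, and the pointwise decay follows immediately from membership in $X$.

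The main obstacle I anticipate is the $H^2$-level estimate on the nonlinearity: it simultaneously forces $H^2\hookrightarrow L^\infty$ (hence $n\le 3$), $C^2$ regularity with locally Lipschitz $f''$ (hence $p\ge 3$, needed for the contraction through the term $(f''(\partial_t u)-f''(\partial_t v))(\nabla\partial_t u)^2$), and the time-integrability condition $(p-1)\alpha_0/2>1$. These three constraints together explain the piecewise threshold \eqref{O}: in the range $\mu\le 1$ the integrability condition is binding and reproduces the exponent $1+2/\mu$, while for $\mu>1$ the regularity wall at $p=3$ takes over.
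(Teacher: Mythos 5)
Your proposal follows essentially the same route as the paper: a Banach fixed-point argument for the Duhamel map in the weighted space with norm $\sup_t (1+t)^{\alpha_0/2}\big((1+t)^{-1}\|u\|_{L^2}+\|\nabla u\|_{H^2}+\|\partial_t u\|_{H^2}\big)$, using the linear decay estimates of Proposition \ref{prop2.1}, the embeddings $H^2\hookrightarrow L^\infty$ and $H^1\hookrightarrow L^4$ for $n\le 3$, the chain-rule/Lipschitz bounds on $f$, $f'$, $f''$ (the paper's Lemma \ref{handle}, requiring $p\ge 3$ for the second-order terms), and the integrability condition $(p-1)\alpha_0/2>1$. Your identification of the three binding constraints matches the paper's Remark \ref{rem1}, and the argument is correct.
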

%

\begin{remark}\label{rem1} Let $n=2,3$. Note that (\ref{O}) implies $p>1+\frac{2}{\alpha}$ and $p\geq3$ where $\alpha=\min\{2,\mu\}$. Indeed,\\
\begin{itemize}
\item Case of $\mu\leq 1$. In this case, $\alpha=\mu$. We guarantee that
$$p>1+\frac{2}{\mu}=1+\frac{2}{\alpha}\geq3.$$
\item Case of $1<\mu\leq 2$. In this case, $\alpha=\mu$. We guarantee that
$$p\geq3>1+\frac{2}{\mu}=1+\frac{2}{\alpha}.$$
\item Case of $\mu> 2$. In this case, $\alpha=2$. We guarantee that
$$p\geq3>2=1+\frac{2}{\alpha}.$$
\end{itemize}
\end{remark}

\begin{remark} Let $n=2,3$, and
$$p_G(n+\mu)=1+\frac{2}{n+\mu-1}.$$
As $p\geq 3>p_G(n+\mu)$, it is still an open problem to see whether the solution blows-up or exists globally in time when 
$$
\begin{cases}
p_G(n+\mu)<p\leq1+ \frac{2}{\mu},&\,\,\hbox{if}\,\,\mu\leq 1,\\\\
p_G(n+\mu)<p< 3,&\,\,\hbox{if}\,\, \mu>1.
\end{cases}
$$
\end{remark}


\section{Proof of Theorems  \ref{globalexistence1}, \ref{globalexistence2}}	\label{sec4}

In this Section  is devoted to the prove Theorems \ref{globalexistence1} and \ref{globalexistence2}.
To handle the nonlinear term in the proofs, we first recall the following  elementary inequalities and lemmas
 that are useful in our proofs
 \begin{lemma}\label{basic}
Let $p\geq1, a,b\in  \R$. Then 
\begin{eqnarray}
||a|^p-|b|^p|&\leq &
C (|a|^{p-1}+|b|^{p-1}) |a-b|, \label{ab}\\
||a|^{p-1}a-|b|^{p-1}b|&\leq &
C (|a|^{p-1}+|b|^{p-1})|a-b|. \label{abb}
\end{eqnarray}
\end{lemma}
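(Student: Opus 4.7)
The plan is to prove both inequalities by elementary case analysis combined with the mean value theorem, exploiting the fact that the auxiliary maps $g(x)=|x|^p$ and $h(x)=|x|^{p-1}x$ are Lipschitz on compact sets with derivatives dominated by $|x|^{p-1}$.

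For \eqref{abb} I would observe first that $h(x)=|x|^{p-1}x$ is continuously differentiable on $\mathbb{R}$ whenever $p\geq 1$, with $h'(x)=p|x|^{p-1}$ (and $h'(0)=0$ if $p>1$, $h'(0)=1$ if $p=1$). A direct application of the mean value theorem to $h$ on the segment joining $a$ and $b$ yields $\xi$ between $a$ and $b$ such that
\begin{equation*}
\bigl||a|^{p-1}a-|b|^{p-1}b\bigr|=p|\xi|^{p-1}|a-b|\leq p\bigl(|a|^{p-1}+|b|^{p-1}\bigr)|a-b|,
\end{equation*}
since $|\xi|\leq\max(|a|,|b|)$. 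This gives \eqref{abb} with $C=p$.

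For \eqref{ab} I would split into cases according to the signs of $a,b$. When $a$ and $b$ have the same sign (or one is zero), write $|a|=\sigma a$, $|b|=\sigma b$ with $\sigma=\pm 1$, so that $||a|^p-|b|^p|=|(\sigma a)^p-(\sigma b)^p|$, and apply the mean value theorem to the $C^1$ map $x\mapsto x^p$ on $[0,\infty)$: there exists $\xi$ between $|a|$ and $|b|$ with $||a|^p-|b|^p|=p\xi^{p-1}\bigl||a|-|b|\bigr|\leq p(|a|^{p-1}+|b|^{p-1})|a-b|$, where the last step uses the triangle inequality $\bigl||a|-|b|\bigr|\leq|a-b|$. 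When $a$ and $b$ have opposite signs, then $|a-b|=|a|+|b|\geq\max(|a|,|b|)$, so
\begin{equation*}
\bigl||a|^p-|b|^p\bigr|\leq|a|^p+|b|^p=|a|^{p-1}|a|+|b|^{p-1}|b|\leq\bigl(|a|^{p-1}+|b|^{p-1}\bigr)|a-b|.
\end{equation*}
Combining both cases yields \eqref{ab}.

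Since both parts reduce to one-variable calculus, no step is a serious obstacle; the only mild care needed is the case split in \eqref{ab} (where the mean value theorem cannot be applied directly to $|x|^p$ across the origin because the derivative has a sign jump), and verifying that $h(x)=|x|^{p-1}x$ is genuinely $C^1$ at the origin when $p\geq 1$, which is a routine check. The resulting constants are explicit ($C=p$), and both inequalities will then be available for handling the nonlinear terms $|\partial_t u|^p$ and $|\partial_t u|^{p-1}\partial_t u$ in the fixed-point argument of Section~\ref{sec4}.
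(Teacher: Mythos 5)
Your proof is correct. Note that the paper itself does not prove Lemma \ref{basic}: it is merely recalled as a standard elementary fact and used as a black box in the proof of Lemma \ref{handle}, so your argument fills in a step the authors left implicit. Your treatment of \eqref{abb} via the mean value theorem applied to $h(x)=|x|^{p-1}x$ is exactly the standard route, and the check that $h$ is $C^1$ at the origin for $p\geq 1$ is the right point to be careful about. One small simplification for \eqref{ab}: the sign case split is unnecessary. Since $|a|$ and $|b|$ are both nonnegative, you may always apply the mean value theorem to $x\mapsto x^p$ on $[0,\infty)$ between $|a|$ and $|b|$, obtaining
\begin{equation*}
\bigl||a|^p-|b|^p\bigr|=p\,\xi^{p-1}\bigl||a|-|b|\bigr|\leq p\bigl(|a|^{p-1}+|b|^{p-1}\bigr)|a-b|
\end{equation*}
for some $\xi$ between $|a|$ and $|b|$, where the last step uses the reverse triangle inequality $\bigl||a|-|b|\bigr|\leq|a-b|$, valid for all real $a,b$ regardless of sign. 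This collapses both of your cases into one line and still yields the explicit constant $C=p$. Either way, the inequalities are established with all that is needed for the nonlinear estimates in Section \ref{sec4}.
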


\begin{lemma}\label{handle}
Let $p\geq2$ and $F,G:\mathbb{R}\rightarrow\mathbb{R}$ such that $F(u(x))=|u(x)|^p$, and $G(u(x))=|u(x)|^{p-1}u(x)$, for all $u:\mathbb{R}^n\rightarrow\mathbb{R}$. Then  for all  $u,v:\mathbb{R}^n\rightarrow\mathbb{R}$, we have
\begin{equation}\label{21A}
|F(u(x))-F(v(x))|\leq
C(|u(x)|^{p-1}+|v(x)|^{p-1}) |u(x)-v(x)|,
\end{equation}
\begin{equation}\label{21AG}
|G(u(x))-G(v(x))|\leq
C(|u(x)|^{p-1}+|v(x)|^{p-1}) |u(x)-v(x)|,
\end{equation}
\begin{equation}\label{21B}
\big|\nabla \big[F(u(x))\big]-\nabla \big[F(v(x))\big]\big|\leq C \ |u(x)|^{p-1}|\nabla (u(x)-v(x))|+C |\nabla v(x) |
(|u(x)|^{p-2}+|v(x)|^{p-2}) |u(x)-v(x)|,
\end{equation}
for all $x\in\mathbb{R}^n$, and 
\begin{equation}\label{21BG}
\big|\nabla \big[G(u(x))\big]-\nabla \big[G(v(x))\big]\big|\leq C \ |u(x)|^{p-1}|\nabla (u(x)-v(x))|+C |\nabla v(x) |
(|u(x)|^{p-2}+|v(x)|^{p-2}) |u(x)-v(x)|,
\end{equation}
a.e. $x\in\mathbb{R}^n$. In addition, if $p\geq 3$ then 
\begin{eqnarray}\label{21C}
\big|\Delta \big[F(u(x))\big]-\Delta \big[F(v(x))\big]\big|&\leq& C \ |u(x)|^{p-1}|\Delta (u(x)-v(x))|\nonumber\\
&&+\,C\, |\Delta v(x) |\ 
\big(  |u(x)|^{p-2}+|v(x)|^{p-2}\big) |u(x)-v(x)|\nonumber
\\
&&+\,C\,|u(x)|^{p-2}\big(|\nabla u(x)|+ |\nabla v(x)|\big) |\nabla (u(x)-v(x))|\nonumber\\
&&+\,C\, |\nabla v(x) |^2\ 
\big(  |u(x)|^{p-3}+|v(x)|^{p-3}\big) |u(x)-v(x)|,
\end{eqnarray}
and
\begin{eqnarray}\label{21CG}
\big|\Delta \big[G(u(x))\big]-\Delta \big[G(v(x))\big]\big|&\leq& C \ |u(x)|^{p-1}|\Delta (u(x)-v(x))|\nonumber\\
&&+\,C\, |\Delta v(x) |\ 
\big(  |u(x)|^{p-2}+|v(x)|^{p-2}\big) |u(x)-v(x)|\nonumber
\\
&&+\,C\,|u(x)|^{p-2}\big(|\nabla u(x)|+ |\nabla v(x)|\big) |\nabla (u(x)-v(x))|\nonumber\\
&&+\,C\, |\nabla v(x) |^2\ 
\big(  |u(x)|^{p-3}+|v(x)|^{p-3}\big) |u(x)-v(x)|,
\end{eqnarray}
a.e. $x\in\mathbb{R}^n$.
\end{lemma}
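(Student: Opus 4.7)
My plan is to differentiate $F(u)=|u|^p$ and $G(u)=|u|^{p-1}u$ explicitly via the chain rule and decompose each resulting difference by repeatedly adding and subtracting mixed terms, so that every piece reduces to a scalar pointwise estimate of the type handled by Lemma \ref{basic}. The pointwise bounds \eqref{21A} and \eqref{21AG} are then immediate from Lemma \ref{basic} applied with $a=u(x)$, $b=v(x)$.

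For the first-order inequalities \eqref{21B} and \eqref{21BG}, I would compute $\nabla F(u)=p\,u|u|^{p-2}\nabla u$ and $\nabla G(u)=p|u|^{p-1}\nabla u$ (valid pointwise since both $F,G\in C^1(\R)$ for $p\ge 2$), and then write
\[
\nabla F(u)-\nabla F(v)=p\,u|u|^{p-2}\,\nabla(u-v)+p\bigl(u|u|^{p-2}-v|v|^{p-2}\bigr)\,\nabla v.
\]
The first piece is bounded by $C|u|^{p-1}|\nabla(u-v)|$, while the second is controlled by Lemma \ref{basic} applied with exponent $p-1\ge 1$, giving $\bigl|u|u|^{p-2}-v|v|^{p-2}\bigr|\le C(|u|^{p-2}+|v|^{p-2})|u-v|$. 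Combined, these produce \eqref{21B}; the proof of \eqref{21BG} is identical after replacing $u|u|^{p-2}$ by $|u|^{p-1}$.

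For the second-order inequalities \eqref{21C} and \eqref{21CG}, under the assumption $p\ge 3$, the chain rule yields
\[
\Delta F(u)=p\,u|u|^{p-2}\,\Delta u+p(p-1)|u|^{p-2}|\nabla u|^2,
\]
and an analogous formula for $\Delta G(u)$. Subtracting the same expression at $v$ and inserting mixed terms produces four pieces: $p\,u|u|^{p-2}\Delta(u-v)$, estimated by $C|u|^{p-1}|\Delta(u-v)|$; $p(u|u|^{p-2}-v|v|^{p-2})\Delta v$, handled by Lemma \ref{basic} with exponent $p-1$; $p(p-1)|u|^{p-2}(|\nabla u|^2-|\nabla v|^2)$, controlled by factoring $|\nabla u|^2-|\nabla v|^2=(\nabla u+\nabla v)\cdot\nabla(u-v)$; and $p(p-1)(|u|^{p-2}-|v|^{p-2})|\nabla v|^2$, bounded via Lemma \ref{basic} with exponent $p-2$. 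Summing delivers exactly \eqref{21C}; the computation for $G$ is the same up to signs of the second derivative.

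The main technical subtlety is the regularity threshold $p\ge 3$: it is precisely the pointwise estimate on $\bigl|\,|u|^{p-2}-|v|^{p-2}\,\bigr|$, required in the fourth piece above, that forces $p-2\ge 1$. Beyond this, the proof is essentially bookkeeping, and care is only needed to verify that the chain-rule computations for $|u|^p$ and $|u|^{p-1}u$ are justified, which is immediate under the stated hypotheses.
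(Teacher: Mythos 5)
Your proposal is correct and follows essentially the same route as the paper: the same chain-rule identities for $\nabla F$, $\Delta F$ (the paper writes them componentwise as \eqref{20A} and \eqref{20E}), the same add-and-subtract decompositions, and the same reduction of each remainder to the scalar inequalities \eqref{ab}--\eqref{abb} of Lemma \ref{basic}, with the thresholds $p\ge 2$ and $p\ge 3$ entering exactly where you say they do. No gaps.
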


\begin{proof}
Note that \eqref{21A} follows from \eqref{ab}. A straightforward calculation implies
\begin{eqnarray}\label{20A}
\partial_iF(u(x))-\partial_iF(v(x))&=&p\Big(|u(x)|^{p-2}u(x)\partial_iu(x)-|v(x)|^{p-2}v(x)\partial_iv(x)\Big)\label{20}\\
&=&p|u(x)|^{p-2}u(x)(\partial_iu(x)-\partial_iv(x))+p
\partial_iv(x) (|u(x)|^{p-2}u(x)-|v(x)|^{p-2}v(x)),\nonumber
\end{eqnarray}
for all $x\in\mathbb{R}^n$. Consequently, we derive
\begin{eqnarray}\label{21d}
\big|\nabla \big[F(u)\big]-\nabla \big[F(v)\big]\big|
&\leq& C \ |u|^{p-1}|\nabla u-\nabla v|+
C |\nabla v |\ 
\big|  |u|^{p-2}u-|v|^{p-2}v\big|,
\end{eqnarray}
for all $x\in\mathbb{R}^n$. By taking into account the inequality 
\eqref{abb} together with $p\geq2$, and \eqref{21d} we conclude \eqref{21B}. In
order to derive estimate \eqref{21C}, we first 
differentiate the identity \eqref{20} 
\begin{eqnarray}\label{20E}
\partial^2_iF(u)-\partial^2_iF(v)&=&p\Big(|u|^{p-2}u\partial^2_iu-|v|^{p-2}v\partial^2_iv\Big)+p(p-1)\big(|u|^{p-2}(\partial_iu)^2-|v|^{p-2}(\partial_iv)^2\big)\nonumber\\\nonumber\\
&=&p|u|^{p-2}u(\partial^2_iu-\partial^2_iv)+p
\partial^2_iv (|u|^{p-2}u-|v|^{p-2}v)\nonumber\\
&&+p(p-1)|u|^{p-2}\big( \partial_iu-\partial_iv\big)\big( \partial_iu+\partial_iv\big)\\
&&+p(p-1)(\partial_iv)^2 
\Big(|u|^{p-2}-|v|^{p-2}\big).\nonumber
\end{eqnarray}
a.e. $x\in\mathbb{R}^n$, and then we use the fact that $p\geq 3$, \eqref{ab}, and  \eqref{abb}. Similarly, we get \eqref{21AG},\eqref{21BG}, and \eqref{21CG}.
This ends the proof of Lemma \ref{handle}.
\end{proof}

\subsection{Proof of Theorem \ref{globalexistence1}}
Assume that  $u_0 \in H^2(\R)$ and $u_1 \in H^1(\R)$. Let $p>1+\frac{2}{\alpha},$
 where $\alpha= \min (2,\mu)$. Let  $0<\varepsilon_0\ll1$, small enough, such that  
$\|u_0\|_{H^2}+\|u_1\|_{H^1}\leq\varepsilon_0$.
 We start by introducing, for $T>0$, the space of energy solutions
$$X(T)=  \mathcal{C}([0,T],H^2(\mathbb{R}))\cap \mathcal{C}^1([0,T],H^1(\mathbb{R})),$$
equipped with the norm
$$\|v\|_{X(T)}=\sup_{0\leq t\leq T}(1+t)^{\frac{\alpha_0}{2}}\left\{ (1+t)^{-1}\| v(t)\|_{L^2}+
\|\nabla v(t)\|_{H^1}+\|\partial_t v(t)\|_{H^1}\right\},$$
 for any $v\in X(T)$, where $\alpha_0$ is given in \eqref{alpha0}. We are going to use the Banach fixed-point theorem. Let us define the following complete metric space $B_{M}(T)=\{v\in X(T);\,\,\|v\|_{X(T)}\leq  M\},$ where $M>0$ is a positive constant that will be chosen later. By Proposition \ref{prop2.1}, and estimates (\ref{An1}) and (\ref{Bn1}) below, we may define a mapping $\Phi:B_M(T)\rightarrow X(T)$ such that
$$\Phi(u)(t)=R(t)(u_0,u_1)+\int_0^tS(t,s)f(\partial_tu)\,ds=:\Phi(u)^{lin}(t)+\Phi(u)^{nl}(t),\quad\hbox{for $u\in B_M(T)$}.$$
At this stage, we divide our proof into 3 steps .\\

\noindent {\bf Step 1.} We prove in this step that $\Phi:B_M(T)\longrightarrow B_M(T)$. Let $u\in B_M(T)$.\\

\noindent $\bullet$  Estimation of $(1+t)^{-1} \|\Phi(u)(t)\|_{L^2}+ \|\partial_t\Phi(u)(t)\|_{L^2}+\|\nabla\Phi(u)(t)\|_{L^2}$. Using  \eqref{0A1} in the particular case where $s=0$, we have
\begin{equation} \label{GG1}
(1+t)^{-1}
\|\Phi(u)^{lin}(t)\|_{L^2}+
\|\nabla \Phi(u)^{lin}(t)\|_{L^2}+
\|\partial_t \Phi(u)^{lin}(t)\|_{L^2}
	\leq C \,\varepsilon_0
 ({1+t})^{-\frac{\alpha_0}2}.
\end{equation}
Moreover, 
 by using  \eqref{0A1}  again in the case where the initial data is $(0, f(\partial_tu(s)))$, we obtain
\begin{equation}\label{T0}
(1+t)^{-1}
\| \Phi(u)^{nl}(t)\|_{L^2}+
\|\partial_t \Phi(u)^{nl}(t)\|_{L^2}+\|\nabla  \Phi(u)^{nl}(t)\|_{L^2}\leq C\,\int_0^t\left(\frac{1+s}{1+t}\right)^{\frac{\alpha_0}{2}}\|f(\partial_tu(s))\|_{L^2}\,ds.
\end{equation}
By the Sobolev embedding $H^1(\mathbb{R})\hookrightarrow L^\infty(\mathbb{R})$,  and the fact that $u\in B_M(T)$, we infer 
\begin{align}\label{S01}
\|\partial_tu(s)\|_{L^{\infty}}\leqslant C \|\partial_tu(s)\|_{H^1}\leqslant  C(1+s)^{-\frac{\alpha_0}{2}}\|u\|_{X(T)}\le C M(1+s)^{-\frac{\alpha_0}{2}}.
\end{align}
Therefore, by \eqref{S01} and  exploiting again  the fact that $u\in B_M(T)$, we conclude
\begin{equation}\label{An1}
\|f(\partial_t u(s))\|_{L^2}\leq \|\partial_tu(s)\|_{\infty}^{p-1}\|\partial_tu(s)\|_{L^2}\leq 
C  M^p(1+s)^{-\frac{\alpha_0 p}2}.
\end{equation}
Combining \eqref{T0} and \eqref{An1}, we obtain
\begin{equation}\label{Ty}
(1+t)^{-1}\| \Phi(u)^{nl}(t)\|_{L^2}+
\|\partial_t \Phi(u)^{nl}(t)\|_{L^2}+\|\nabla  \Phi(u)^{nl}(t)\|_{L^2}\leq C\,M^p(1+t)^{-\frac{\alpha_0}{2}}\int_0^t(1+s)^{-\frac{(p-1)\alpha_0}{2}}\,ds.
\end{equation}
Since $p>1+\frac{2}{\alpha}$, one can choose in the case of $\mu=2$, $\eta$ sufficiently small (namely $0<\eta<\frac{2(p-2)}{p-1}$), we get 
\begin{equation}\label{alpha-alpha0}
p>1+\frac{2}{\alpha_0}\quad\hbox{which guarantees}\quad \frac{(p-1)\alpha_0}{2}>1.
\end{equation}
 Now, using \eqref{alpha-alpha0} in \eqref{Ty}, we deduce 
\begin{equation}\label{G200}
(1+t)^{-1}\| \Phi(u)^{nl}(t)\|_{L^2}+
\|\partial_t \Phi(u)^{nl}(t)\|_{L^2}+\|\nabla  \Phi(u)^{nl}(t)\|_{L^2}\leq
 C\,M^p(1+t)^{-\frac{\alpha_0}{2}}.
\end{equation}
Hence, by \eqref{GG1}, and \eqref{G200}, we get
\begin{equation}\label{G2T}
(1+t)^{-1}\| \Phi(u)(t)\|_{L^2}+
\|\partial_t \Phi(u)(t)\|_{L^2}+\|\nabla  \Phi(u)(t)\|_{L^2}\leq  C \,\varepsilon_0
 ({1+t})^{-\frac{\alpha_0}2}+\,
 C\,M^p(1+t)^{-\frac{\alpha_0}{2}}.
\end{equation}

\noindent $\bullet$ Estimation of $\|\nabla\partial_t\Phi(u)(t)\|_{L^2}+\|\Delta \Phi(u)(t)\|_{L^2}$. Using \eqref{5mai1} in the particular case where $s=0$, we have
\begin{equation}\label{H6} 
\|\nabla \partial_t \Phi(u)^{lin}(t)\|_{L^2}+\|\Delta \Phi(u)^{lin}(t)\|_{L^2}
	\leq C \,\varepsilon_0
 ({1+t})^{-\frac{\alpha_0}2}.
\end{equation}
Moreover, 
 by using  \eqref{5mai1} again in the case where the initial data is $(0, f(\partial_tu(s)))$, we obtain
\begin{equation}\label{fh}
\|\nabla \partial_t \Phi(u)^{nl}(t)\|_{L^2}+\|\Delta \Phi(u)^{nl}(t)\|_{L^2}
\leq C\,\int_0^t\left(\frac{1+s}{1+t}\right)^{\frac{\alpha_0}{2}}\|\nabla f(\partial_tu(s))\|_{L^2}\,ds.
\end{equation}
By \eqref{S01} and exploiting again  the fact that $u\in B_M(T)$, we have
\begin{equation}\label{Bn1}
\|\nabla f(\partial_tu(s))\|_{L^2}=\|p|\partial_tu|^{p-1}\nabla \partial_tu\|_{L^2}\leq p\|\partial_tu\|_\infty^{p-1}\|\nabla \partial_tu\|_{L^2}\leq C  M^p(1+s)^{-\frac{p\alpha_0}{2}}.
\end{equation}
Therefore, inserting \eqref{Bn1} in  \eqref{fh}, we conclude
\begin{equation} 
\|\nabla \partial_t \Phi(u)^{nl}(t)\|_{L^2}+\|\Delta \Phi(u)^{nl}(t)\|_{L^2}
\leq C\,M^p(1+t)^{-\frac{\alpha_0}{2}}\int_0^t(1+s)^{-\frac{(p-1)\alpha_0}{2}}\,ds.
\end{equation}
By \eqref{alpha-alpha0}, we deduce that
\begin{equation} \label{7mai3}
\|\nabla \partial_t \Phi(u)^{nl}(t)\|_{L^2}+\|\Delta \Phi(u)^{nl}(t)\|_{L^2}
\leq C\,M^p(1+t)^{-\frac{\alpha_0}{2}}.
\end{equation}
Therefore, by \eqref{H6}, and \eqref{7mai3}, we get
\begin{equation} \label{G2000}
\|\nabla \partial_t \Phi(u)(t)\|_{L^2}+\|\Delta \Phi(u)(t)\|_{L^2}
\leq C\,\varepsilon_0 (1+t)^{-\frac{\alpha_0}{2}}
+ C\,M^p(1+t)^{-\frac{\alpha_0}{2}}.
\end{equation}
Summing up the  estimates \eqref{G2T} and \eqref{G2000}, we conclude that
$$ \|\Phi(u)\|_{X(T)}\leq  C\,\varepsilon_0+C\,M^p.$$
By choosing $M>0$ such that $C\,M^{p-1}\leq 1/2$, and then $0<\varepsilon_0\ll1$ such that $C\,\varepsilon_0 \leq M/2$, we arrive at
$$ \|\Phi(u)\|_{X(T)}\leq M,$$
i.e. $\Phi(u)\in B_M(T)$.\\

\noindent {\bf Step 2.} We show that $\Phi$ is a contraction in $B_M(T)$. Let $u,v\in B_M(T)$.\\

\noindent $\bullet$ Estimation of $(1+t)^{-1}\|\Phi(u)(t)-\Phi(v)(t)\|_{L^2}+\|\partial_t(\Phi(u)-\Phi(v))(t)\|_{L^2}+\|\nabla(\Phi(u)-\Phi(v))(t)\|_{L^2}$.

\noindent By using  the estimate  \eqref{0A1}  in the case where the initial data is $(0, f(\partial_tu(s))-f(\partial_tv(s)))$, we obtain
\begin{multline}\label{TR}
(1+t)^{-1}
\| \Phi(u)(t)-\Phi(v)(t)
\|_{L^2}+
\|\partial_t \Phi(u)(t)-\partial_t \Phi(v)(t)\|_{L^2}+\|\nabla  \Phi(u)(t)-\nabla  \Phi(v)(t)\|_{L^2}\\
\leq C\,\int_0^t\left(\frac{1+s}{1+t}\right)^{\frac{\alpha_0}{2}}\|f(\partial_tu(s))-f(\partial_tv(s))\|_{L^2}\,ds.
\end{multline}
According to the basic inequality \eqref{21A}-\eqref{21AG}, we get
$$\|f(\partial_tu(s))-f(\partial_tv(s))\|_{L^2}\leq \|\partial_tu(s)-\partial_tv(s)\|_{L^2}\left(\|\partial_tu(s)\|^{p-1}_{\infty}+\|\partial_tv(s)\|^{p-1}_{\infty}\right).$$
We notice that 
$$\|\partial_tu(s)-\partial_tv(s)\|_{L^2}\leq (1+s)^{-\frac{\alpha_0}{2}}\|u-v\|_{X(T)}. $$
In addition, by applying \eqref{S01} to $u,v\in B_M(T)$, we get
$$
\|\partial_tu(s)\|_{\infty}^{p-1},\|\partial_tv(s)\|_{\infty}^{p-1}\leq C M^{p-1}(1+s)^{-\frac{\alpha_0(p-1)}{2}}.
$$
The above estimates imply that
\begin{equation}\label{Fn1}
\|f(\partial_tu(s))-f(\partial_tv(s))\|_{L^2}\leq CM^{p-1}(1+s)^{-\frac{p\alpha_0}{2}}\|u-v\|_{X(T)}. 
\end{equation}
Hence, by using \eqref{TR} and \eqref{Fn1} we infer
\begin{multline}\label{TRR}
(1+t)^{-1}
\| \Phi(u)(t)-\Phi(v)(t)
\|_{L^2}+
\|\partial_t \Phi(u)(t)-\partial_t \Phi(v)(t)\|_{L^2}+\|\nabla  \Phi(u)(t)-\nabla  \Phi(v)(t)\|_{L^2}\\
\leq C\,M^{p-1}(1+t)^{-\frac{\alpha_0}{2}} \|u-v\|_{X(T)}
\int_0^t(1+s)^{-\frac{(p-1)\alpha_0}{2}}\,ds.
\end{multline}
Making use of \eqref{alpha-alpha0}  with \eqref{TRR}, we derive
\begin{multline}\label{contractionA}
(1+t)^{-1}
\| \Phi(u)(t)-\Phi(v)(t)
\|_{L^2}+
\|\partial_t \Phi(u)(t)-\partial_t \Phi(v)(t)\|_{L^2}+\|\nabla  \Phi(u)(t)-\nabla  \Phi(v)(t)\|_{L^2}\\
\leq C\,M^{p-1}(1+t)^{-\frac{\alpha_0}{2}}\|u-v\|_{X(T)}.
\end{multline}
\noindent $\bullet$ Estimation of $\|\Delta(\Phi(u)(t)-\Phi(v)(t))\|_{L^2}$+$\|\nabla\partial_t(\Phi(u)(t)-\Phi(v)(t))\|_{L^2}$. By  using \eqref{5mai1},  we have
\begin{eqnarray}
&{}& \|\Delta\Phi(u)(t)-\Delta\Phi(v)(t)\|_{L^2}+\|\nabla\partial_t \Phi(u)(t)-\nabla\partial_t\Phi(v)(t)\|_{L^2}\nonumber\\
&{}&\leq\, C\,\int_0^t\left(\frac{1+s}{1+t}\right)^{\frac{\alpha_0}{2}}\|\nabla \left(f(\partial_tu(s))-f(\partial_tv(s))\right)\|_{L^2}\,ds.\label{Hn1}
\end{eqnarray}
Since $\alpha \le 2$, then  we have $p>1+\frac{2}{\alpha}\ge 2$, which allows us to use \eqref{21B}-\eqref{21BG}. Therefore
\begin{align*}
&\left\|\nabla \left(f(\partial_tu(s))-f(\partial_tv(s))\right)\right\|_{L^2}\\
&\leqslant C\|\nabla \partial_tw(s)\|_{L^{2}}\|\partial_tu(s)\|^{p-1}_{L^{\infty}}+C\|\partial_tw(s)\|_{L^{\infty}}\left(\| \partial_tu(s)\|^{p-2}_{L^{\infty}}+\| \partial_tv(s)\|^{p-2}_{L^{\infty}}\right)\|\nabla \partial_tv(s)\|_{L^{2}},
\end{align*}
where $w(t,x)\doteq u(t,x)-v(t,x)$. We notice from the Sobolev embedding $H^1(\mathbb{R})\hookrightarrow L^\infty(\mathbb{R})$ that
\begin{align*}
\|\partial_tu(s)\|_{L^{\infty}}&\leqslant C\|\partial_tu(s)\|_{H^1}\leqslant  C(1+s)^{-\frac{\alpha_0}{2}}\|u\|_{X(T)}\leqslant  CM(1+s)^{-\frac{\alpha_0}{2}},\\
\|\partial_tv(s)\|_{L^{\infty}}&\leqslant C\|\partial_tv(s)\|_{H^1}\leqslant C(1+s)^{-\frac{\alpha_0}{2}}\|v\|_{X(T)} \leqslant CM(1+s)^{-\frac{\alpha_0}{2}},\\
\|\partial_tw(s)\|_{L^{\infty}}&\leqslant C\|\partial_tw(s)\|_{H^1}\leqslant C(1+s)^{-\frac{\alpha_0}{2}}\|w\|_{X(T)}=C(1+s)^{-\frac{\alpha_0}{2}}\|u-v\|_{X(T)},\\
\|\nabla \partial_tv(s)\|_{L^{2}}&\leqslant (1+s)^{-\frac{\alpha_0}{2}}\|v\|_{X(T)}  \le M(1+s)^{-\frac{\alpha_0}{2}},\nonumber\\
\|\nabla \partial_tw(s)\|_{L^{2}}&\leqslant (1+s)^{-\frac{\alpha_0}{2}}\|w\|_{X(T)}=(1+s)^{-\frac{\alpha_0}{2}}\|u-v\|_{X(T)}.\nonumber
\end{align*}
Therefore,
\begin{equation}\label{Gn1}\left\|\nabla \left(f(\partial_tu(s))-f(\partial_tv(s))\right)\right\|_{L^2}\leqslant CM^{p-1}(1+s)^{-\frac{p\alpha_0}{2}}\|u-v\|_{X(T)}
\end{equation}
By plugging  (\ref{Gn1}) into (\ref{Hn1}),  and using   \eqref{alpha-alpha0}, we get
\begin{equation}\label{contractionB}
\|\Delta\Phi(u)(t)-\Delta\Phi(v)(t)\|_{L^2}+\|\nabla\partial_t \Phi(u)(t)-\nabla\partial_t\Phi(v)(t)\|_{L^2}\leq CM^{p-1}(1+t)^{-\frac{\alpha_0}{2}}\|u-v\|_{X(T) }.
\end{equation}
Summing up the above estimates \eqref{contractionA} and \eqref{contractionB}, we conclude that
$$ \|\Phi(u)-\Phi(v)\|_{X(T)}\leq  C\,M^{p-1}\|u-v\|_{X(T)}.$$
By choosing $M>0$ such that $C\,M^{p-1}\leq 1/2$, we arrive at
$$ \|\Phi(u)-\Phi(v)\|_{X(T)}\leq \frac{1}{2}\|u-v\|_{X(T)}.$$
\noindent {\bf Step 3.}  By the Banach fixed point theorem, there exists a unique mild solution $u\in X(T)$ to problem (\ref{NLW}). This completes the proof of Theorem \ref{globalexistence1}.\\

\subsection{Proof of Theorem \ref{globalexistence2}}
 We start by introducing, for $T>0$, the space of energy solutions
$$X(T)= \mathcal{C}([0,T],H^3(\mathbb{R}^n))\cap \mathcal{C}^1([0,T],H^2(\mathbb{R}^n))$$
equipped with the norm
$$\|v\|_{X(T)}=\sup_{0\leq t\leq T}(1+t)^{\frac{\alpha_0}{2}}\left\{(1+t)^{-1}\| v(t)\|_{L^2}+\|\nabla v(t)\|_{H^2}+\|\partial_t v(t)\|_{H^2}\right\},$$
 for any $v\in X(T)$, where $\alpha_0$ is given in \eqref{alpha0}. Let us define the following complete metric space $B_{M}(T)=\{v\in X(T);\,\,\|v\|_{X(T)}\leq  M\},$ where $M>0$ is a positive constant that will be chosen later. By Proposition \ref{prop2.1}, and estimates (\ref{A}), (\ref{B001}), and (\ref{K3}) below, we define a mapping $\Phi:B_M(T)\rightarrow X(T)$ such that
$$\Phi(u)(t)=R(t)(u_0,u_1)+\int_0^tS(t,s)f(\partial_tu)\,ds=:\Phi(u)^{lin}(t)+\Phi(u)^{nl}(t),\quad\hbox{for $u\in B_M(T)$}.$$
In order to apply the Banach fixed-point theorem, we divide our proof into 3 steps.\\

\noindent {\bf Step 1.} We start by proving that $\Phi:B_M(T)\longrightarrow B_M(T)$. Let $u\in B_M(T)$.\\

\noindent$\bullet$ Estimation of $(1+t)^{-1}\|\Phi(u)(t)\|_{L^2}+\|\partial_t\Phi(u)(t)\|_{L^2}+\|\nabla\Phi(u)(t)\|_{L^2}$. From the estimate \eqref{0A1}, we can write
\begin{equation}\label{G1}
(1+t)^{-1}\| \Phi(u)^{lin}(t)\|_{L^2}+
\|\partial_t \Phi(u)^{lin}(t)\|_{L^2}+\|\nabla \Phi(u)^{lin}(t)\|_{L^2}\leq C\,\varepsilon_0\,(1+t)^{-\frac{\alpha_0}{2}}.
\end{equation}
Moreover, by 
using \eqref{0A1}  again in the case where the initial data is $(0, f(\partial_tu(s)))$, we obtain
\begin{equation}\label{AAA}
(1+t)^{-1}\| \Phi(u)^{nl}(t)\|_{L^2}+
\|\partial_t \Phi(u)^{nl}(t)\|_{L^2}+\|\nabla  \Phi(u)^{nl}(t)\|_{L^2}\leq C\,\int_0^t\left(\frac{1+s}{1+t}\right)^{\frac{\alpha_0}{2}}\|f(\partial_tu(s))\|_{L^2}\,ds.
\end{equation}
In view of the Sobolev embedding $H^2(\mathbb{R}^n)\hookrightarrow L^\infty(\mathbb{R}^n)$, in the case $1\le n\le 3$,  and the fact that $u\in B_M(T)$, we deduce
\begin{equation}\label{A}
\|f(\partial_tu(s))\|_{L^2}\leq \|\partial_tu(s)\|_{L^{\infty}}^{p-1}\|\partial_tu(s)\|_{L^2} \leq C \|\partial_tu(s)\|_{H^2}^{p}\le CM^p(1+s)^{-\frac{\alpha_0 p}2}.
\end{equation}
Hence, using \eqref{A}, we conclude from \eqref{AAA} that
\begin{equation}\label{10s1}
(1+t)^{-1}\| \Phi(u)^{nl}(t)\|_{L^2}+
\|\partial_t \Phi(u)^{nl}(t)\|_{L^2}+\|\nabla  \Phi(u)^{nl}(t)\|_{L^2}\leq C\,M^p(1+t)^{-\frac{\alpha_0}{2}}\int_0^t(1+s)^{-\frac{(p-1)\alpha_0}{2}}\,ds.
\end{equation}
Using \eqref{O}, and choosing $0<\eta<1$ in the case of $\mu=2$, we get 
\begin{equation}\label{alpha-alpha00}
p>1+\frac{2}{\alpha_0}\quad\hbox{which guarantees}\quad \frac{(p-1)\alpha_0}{2}>1.
\end{equation}
Therefore, by exploiting  \eqref{alpha-alpha00}, 
\eqref{10s1} leads to
\begin{equation}\label{G2}
(1+t)^{-1}\| \Phi(u)^{nl}(t)\|_{L^2}+
\|\partial_t \Phi(u)^{nl}(t)\|_{L^2}+\|\nabla  \Phi(u)^{nl}(t)\|_{L^2}\leq
 C\,M^p(1+t)^{-\frac{\alpha_0}{2}}.
\end{equation}
So, by combining \eqref{G1}, and \eqref{G2}, we conclude
\begin{equation}\label{G12}
(1+t)^{-1}\| \Phi(u)(t)\|_{L^2}+
\|\partial_t \Phi(u)(t)\|_{L^2}+\|\nabla  \Phi(u)(t)\|_{L^2}\leq
 (C\,\varepsilon_0+C M^p)(1+t)^{-\frac{\alpha_0}{2}}.
\end{equation}
\noindent$\bullet$ Estimation of $\|\Delta\Phi(u)(t)\|_{L^2}+\|\nabla\partial_t\Phi(u)(t)\|_{L^2}$. Clearly, by using  \eqref{5mai1}, we have
\begin{equation}\label{G3}
\|\Delta \Phi(u)^{lin}(t)\|_{L^2}+\|\nabla\partial_t \Phi(u)^{lin}(t)\|_{L^2}\leq C\,\varepsilon_0\,(1+t)^{-\frac{\alpha_0}{2}},
\end{equation}
and 
\begin{equation}\label{G33}
\|\Delta \Phi(u)^{nl}(t)\|_{L^2}
+\|\nabla\partial_t \Phi(u)^{nl}(t)\|_{L^2}
\leq C\, (1+t)^{-\frac{\alpha_0}{2}}\int_0^t(1+s)^{\frac{\alpha_0}{2}}\|\nabla f(\partial_tu(s))\|_{L^2}\,ds.
\end{equation}
Using the fact that $H^2(\mathbb{R}^n)\hookrightarrow L^\infty(\mathbb{R}^n)$,  in the case $1\le n\le 3$, we get
\begin{equation}\label{B001}
\|\nabla f(\partial_tu(s))\|_{L^2}\leq p\|\partial_tu(s)\|_\infty^{p-1}\|\nabla \partial_tu(s)\|_{L^2}
\leq C\|\partial_tu(s)\|_{H^2}^{p}
\leq C(1+s)^{-\frac{\alpha_0 p}2}\|u\|_{X(T)}^{p}.
\end{equation}
Plugging the  inequality \eqref{B001}, and  using the fact that  $u\in B_M(T)$, into \eqref{G33}, we obtain
\begin{equation}
\|\Delta \Phi(u)^{nl}(t)\|_{L^2}
+\|\nabla\partial_t \Phi(u)^{nl}(t)\|_{L^2}\leq C\,M^p(1+t)^{-\frac{\alpha_0}{2}}\int_0^t(1+s)^{-\frac{(p-1)\alpha_0}{2}}ds.
\end{equation}
Consequently, from \eqref{alpha-alpha00}, we deduce that
\begin{equation}\label{GF2}
\|\Delta \Phi(u)^{nl}(t)\|_{L^2}
+\|\nabla\partial_t \Phi(u)^{nl}(t)\|_{L^2}\leq  CM^p(1+t)^{-\frac{\alpha_0}2}.
\end{equation}
Therefore, from \eqref{G3} and \eqref{GF2}, we deduce that
\begin{equation}\label{GG2}
\|\Delta \Phi(u)(t)\|_{L^2}
+\|\nabla\partial_t \Phi(u)(t)\|_{L^2}\leq  (C\varepsilon_0+C M^p)(1+t)^{-\frac{\alpha_0}2}.
\end{equation}

\noindent$\bullet$ Estimation of $\|\nabla\Delta \Phi(u)(t)\|_{L^2}+\|\partial_t\Delta \Phi(u)(t)\|_{L^2}$. Applying 
\eqref{5mai5},  we have
\begin{equation}\label{G5}
\|\nabla\Delta \Phi(u)^{lin}(t)\|_{L^2}+\|\partial_t \Delta \Phi(u)^{lin}(t)\|_{L^2}\leq C\,\varepsilon_0\,(1+t)^{-\frac{\alpha_0}{2}},
\end{equation}
and
\begin{equation}\label{GG5}
\|\nabla\Delta\Phi(u)^{nl}(t)\|_{L^2}+\|\partial_t\Delta \Phi(u)^{nl}(t)\|_{L^2}\leq C\, (1+t)^{-\frac{\alpha_0}{2}}\int_0^t(1+s)^{\frac{\alpha_0}2}\|\Delta f(\partial_tu(s))\|_{L^2}\,ds.
\end{equation}
Thanks to  \eqref{21C}, and \eqref{21CG}, in the particular case where $v(x)\equiv 0$, we   write
\begin{equation}\label{K1}
\|\Delta f(\partial_tu(s))\|_{L^2}
\leq C\|\partial_tu(s)\|_\infty^{p-2}\|\nabla \partial_tu(s)\|^2_{L^4}+C\|\partial_tu(s)\|_\infty^{p-1}\|\Delta \partial_tu(s)\|_{L^2}.
\end{equation}
Note that, by  exploiting the fact that $H^1(\mathbb{R}^n)\hookrightarrow L^4(\mathbb{R}^n)$ and $H^2(\mathbb{R}^n)\hookrightarrow L^\infty(\mathbb{R}^n)$,  in the case $1\le n\le 3$, 
and    $u\in B_M(T)$,    we deduce from  \eqref{K1} that
\begin{equation}\label{K3}
\|\Delta f(\partial_tu(s))\|_{L^2}
\leq C\|\partial_tu(s)\|^{p}_{H^2}\le C M^p(1+s)^{-\frac{p\alpha_0}{2}}.
\end{equation}
 Therefore, by \eqref{GG5},  \eqref{K3}, and  \eqref{alpha-alpha00}, we infer
\begin{equation}\label{K4}
\|\nabla\Delta\Phi(u)^{nl}(t)\|_{L^2}+\|\partial_t\Delta \Phi(u)^{nl}(t)\|_{L^2} \leq 
 CM^p(1+t)^{-\frac{\alpha_0}2}.
\end{equation}
Consequently, from \eqref{G5} and \eqref{K4}, we deduce that
\begin{equation}\label{GG22}
\|\nabla\Delta\Phi(u)(t)\|_{L^2}+\|\partial_t\Delta \Phi(u)(t)\|_{L^2} \leq  (C\varepsilon_0+C M^p)(1+t)^{-\frac{\alpha_0}2}.
\end{equation}

\noindent Summing up the  estimates \eqref{G12}, \eqref{GG2},  and \eqref{GG22}, we conclude that
$$ \|\Phi(u)\|_{X(T)}\leq  C\,\varepsilon_0+C\,M^p.$$
By choosing $M>0$ such that $C\,M^{p-1}\leq 1/2$, and then $0<\varepsilon_0\ll1$ such that $ C\,\varepsilon_0\leq M/2$, we arrive at
$$ \|\Phi(u)\|_{X(T)}\leq M,$$
i.e. $\Phi(u)\in B_M(T)$.\\

\noindent {\bf Step 2.} We show that $\Phi$ is a contraction. Let $u,v\in B_M(T)$.\\

\noindent$\bullet$ Estimation of $(1+t)^{-1}\|\Phi(u)(t)-\Phi(v)(t)\|_{L^2}+\|\partial_t(\Phi(u)-\Phi(v))(t)\|_{L^2}+\|\nabla(\Phi(u)-\Phi(v))(t)\|_{L^2}$.

\noindent By using  the estimate  \eqref{0A1}  in the case where the initial data is $(0, f(\partial_tu(s))-f(\partial_tv(s)))$, we obtain
\begin{multline}
(1+t)^{-1}
\| \Phi(u)(t)-\Phi(v)(t)
\|_{L^2}+
\|\partial_t \Phi(u)(t)-\partial_t \Phi(v)(t)\|_{L^2}+\|\nabla  \Phi(u)(t)-\nabla  \Phi(v)(t)\|_{L^2}\\
\leq C\,\int_0^t\left(\frac{1+s}{1+t}\right)^{\frac{\alpha_0}{2}}\|f(\partial_tu(s))-f(\partial_tv(s))\|_{L^2}\,ds.
\end{multline}
By the basic inequality \eqref{ab}-\eqref{abb}, we get
$$\|f(\partial_tu(s))-f(\partial_tv(s))\|_{L^2}\leq \|\partial_tu(s)-\partial_tv(s)\|_{L^2}\left(\|\partial_tu(s)\|^{p-1}_{\infty}+\|\partial_tv(s)\|^{p-1}_{\infty}\right).$$
As
$$
\|\partial_tu(s)-\partial_tv(s)\|_{L^2}\leq (1+s)^{-\frac{\alpha_0}{2}}\|u-v\|_{X(T)}.
$$
therefore, using the Sobolev embedding $H^2(\mathbb{R}^n)\hookrightarrow L^\infty(\mathbb{R}^n)$, we get
\begin{equation}\label{As1}
\|\partial_tu(s)\|_{\infty}\leq \|\partial_tu(s)\|_{H^2}\leq (1+s)^{-\frac{\alpha}{2}}\|u\|_{X(T)}\leq C M(1+s)^{-\frac{\alpha_0}{2}},
\end{equation}
and
\begin{equation}\label{As2}
\|\partial_tv(s)\|_{\infty}\leq \|\partial_tv(s)\|_{H^2}\leq (1+s)^{-\frac{\alpha}{2}}\|v\|_{X(T)}\leq C M(1+s)^{-\frac{\alpha_0}{2}}.
\end{equation}
Therefore
\begin{equation}\label{F}
\|f(\partial_tu(s))-f(\partial_tv(s))\|_{L^2}\leq CM^{p-1}(1+s)^{-\frac{p\alpha_0}{2}}\|u-v\|_{X(T)},
\end{equation}
and then
\begin{multline}
(1+t)^{-1}
\| \Phi(u)(t)-\Phi(v)(t)
\|_{L^2}+
\|\partial_t \Phi(u)(t)-\partial_t \Phi(v)(t)\|_{L^2}+\|\nabla  \Phi(u)(t)-\nabla  \Phi(v)(t)\|_{L^2}\\
\leq C\,M^{p-1}\|u-v\|_{X(T)}
(1+t)^{-\frac{\alpha_0}{2}}\int_0^t(1+s)^{-\frac{(p-1)\alpha_0}{2}}\,ds.
\end{multline}
By \eqref{alpha-alpha00}, we deduce that
\begin{multline}\label{contraction3}
(1+t)^{-1}
\| \Phi(u)(t)-\Phi(v)(t)
\|_{L^2}+
\|\partial_t \Phi(u)(t)-\partial_t \Phi(v)(t)\|_{L^2}+\|\nabla  \Phi(u)(t)-\nabla  \Phi(v)(t)\|_{L^2}\\
\leq C\,M^{p-1}1+t)^{-\frac{\alpha_0}{2}}\|u-v\|_{X(T)}.
\end{multline}
\noindent$\bullet$ Estimation of $\|\Delta(\Phi(u)(t)-\Phi(v)(t))\|_{L^2}$+$\|\nabla\partial_t(\Phi(u)(t)-\Phi(v)(t))\|_{L^2}$. Using \eqref{5mai1},  we have
\begin{eqnarray}
&{}& \|\Delta\Phi(u)(t)-\Delta\Phi(v)(t)\|_{L^2}+\|\nabla\partial_t \Phi(u)(t)-\nabla\partial_t\Phi(v)(t)\|_{L^2}\nonumber\\
&{}&\leq\, C\,\int_0^t\left(\frac{1+s}{1+t}\right)^{\frac{\alpha_0}{2}}\|\nabla \left(f(\partial_tu)-f(\partial_tv)\right)\|_{L^2}\,ds.\label{H}
\end{eqnarray}
By Remark \ref{rem1},  we have $p\geq 3> 2$, which allows us to use \eqref{21B}-\eqref{21BG}. Therefore
\begin{align*}
&\left\|\nabla \left(f(\partial_tu(s))-f(\partial_tv(s))\right)\right\|_{L^2}\\
&\leqslant C\|\nabla \partial_tw(s)\|_{L^{2}}\|\partial_tu(s)\|^{p-1}_{L^{\infty}}+C\|\partial_tw(s)\|_{L^{\infty}}\left(\| \partial_tu(s)\|^{p-2}_{L^{\infty}}+\| \partial_tv(s)\|^{p-2}_{L^{\infty}}\right)\|\nabla \partial_tv(s)\|_{L^{2}},
\end{align*}
where $w(t,x)\doteq u(t,x)-v(t,x)$. We notice from the Sobolev embedding $H^2(\mathbb{R}^n)\hookrightarrow L^\infty(\mathbb{R}^n)$ that
\begin{align}\label{As3}
\|\partial_tw(s)\|_{L^{\infty}}&\leqslant C\|\partial_tw(s)\|_{H^2}\leqslant C(1+s)^{-\frac{\alpha_0}{2}}\|w\|_{X(T)}=C(1+s)^{-\frac{\alpha_0}{2}}\|u-v\|_{X(T)},\\
\|\nabla \partial_tv(s)\|_{L^{2}}&\leqslant (1+s)^{-\frac{\alpha_0}{2}}\|v\|_{X(T)}  \le CM(1+s)^{-\frac{\alpha_0}{2}},\nonumber\\
\|\nabla \partial_tw(s)\|_{L^{2}}&\leqslant (1+s)^{-\frac{\alpha_0}{2}}\|w\|_{X(T)}=(1+s)^{-\frac{\alpha_0}{2}}\|u-v\|_{X(T)},\nonumber
\end{align}
therefore, using \eqref{As1}, \eqref{As2}, we conclude that
\begin{equation}\label{G}\left\|\nabla \left(f(\partial_tu(s))-f(\partial_tv(s))\right)\right\|_{L^2}\leqslant CM^{p-1}(1+s)^{-\frac{p\alpha_0}{2}}\|u-v\|_{X(T)}
\end{equation}
By plugging (\ref{G}) into (\ref{H}), and using \eqref{alpha-alpha00}, we get
\begin{equation}\label{contraction4}
\|\Delta\Phi(u)(t)-\Delta\Phi(v)(t)\|_{L^2}+\|\nabla\partial_t \Phi(u)(t)-\nabla\partial_t\Phi(v)(t)\|_{L^2}\leq CM^{p-1}(1+t)^{-\frac{\alpha_0}{2}}\|u-v\|_{X(T) }.
\end{equation}

\noindent$\bullet$ Estimation of $\|\nabla\Delta\left(\Phi(u)(t)-\Phi(v)(t)\right)\|_{L^2}+\|\Delta\partial_t(\Phi(u)(t)-\Phi(v)(t))\|_{L^2}$. Using \eqref{5mai5},  we have
\begin{eqnarray}
&{}& \|\nabla\Delta\Phi(u)(t)-\nabla\Delta\Phi(v)(t)\|_{L^2}+\|\Delta\partial_t \Phi(u)(t)-\Delta\partial_t\Phi(v)(t)\|_{L^2}\nonumber\\
&{}&\leq\, C\,\int_0^t\left(\frac{1+s}{1+t}\right)^{\frac{\alpha_0}{2}}\|\Delta \left(f(\partial_tu)-f(\partial_tv)\right)\|_{L^2}\,ds.\label{K}
\end{eqnarray}
Using Remark \ref{rem1}, one can apply \eqref{21C}-\eqref{21CG} to get
\begin{multline*}
\|\Delta\big[f(\partial_tu(s))-f(\partial_tv(s))\big]\|_{L^2}\leq \underbrace{C\|\partial_tu(s)\|_{L^\infty}^{p-1}\|\Delta[\partial_tw(s)]\|_{L^2}}_{A_1(s)} \\
+\underbrace{C\|\Delta[\partial_tv(s)]\|_{L^2}
\Big(\|\partial_tu(s)\|_{L^\infty}^{p-2}+\|\partial_tv(s)\|_{L^\infty}^{p-2}\Big)\|\partial_tw(s)\|_{L^{\infty}}}_{A_2(s)}\\
+
\underbrace{C\|\partial_tu(s)\|_{L^\infty}^{p-2}\Big(\|\nabla \partial_tu(s)\|_{L^4}+\|\nabla\partial_tv(s)]\|_{L^4}\Big)
\|\nabla\partial_tw(s)\|_{L^4}}_{A_3(s)}
\qquad  \\
+\underbrace{C\|\nabla[\partial_tv(s)]\|_{L^4}^2
\Big(\|\partial_tu(s)\|_{L^\infty}^{p-3}+\|\partial_tv(s)\|_{L^\infty}^{p-3}\Big)\|\partial_tw(s)\|_{L^{\infty}}}_{A_4(s)},\qquad \hbox{for all}\,\, s\in [0,t].
\end{multline*}
Thanks to \eqref{As1}, \eqref{As2}, \eqref{As3}, the Sobolev embedding $H^1(\mathbb{R}^n)\hookrightarrow L^4(\mathbb{R}^n)$, and the fact that  $u,v\in B_M(T)$, we conclude that
\begin{equation}\label{As4}
\|\Delta\big[f(\partial_tu(s))-f(\partial_tv(s))\big]\|_{L^2}
\leq  CM^{p-1}(1+s)^{-\frac{\alpha_0 p}{2}}\|u-v\|_{X(T)}, \qquad \hbox{for all}\,\, s\in [0,t].
 \end{equation}
By plugging (\ref{As4}) into (\ref{K}), and using \eqref{alpha-alpha00}, we get
\begin{equation}\label{contraction5}
\|\Delta\Phi(u)(t)-\Delta\Phi(v)(t)\|_{L^2}+\|\nabla\partial_t \Phi(u)(t)-\nabla\partial_t\Phi(v)(t)\|_{L^2}\leq CM^{p-1}(1+t)^{-\frac{\alpha_0}{2}}\|u-v\|_{X(T) }.
 \end{equation}
Summing up the estimates \eqref{contraction3}, \eqref{contraction4}, and \eqref{contraction5}, we conclude that
$$ \|\Phi(u)-\Phi(v)\|_{X(T)}\leq  C\,M^{p-1}\|u-v\|_{X(T)}.$$
By choosing $M>0$ such that $C\,M^{p-1}\leq 1/2$, we arrive at
$$ \|\Phi(u)-\Phi(v)\|_{X(T)}\leq \frac{1}{2}\|u-v\|_{X(T)}.$$
\noindent {\bf Step 3.} By the Banach fixed point theorem, there exists a unique mild solution $u\in X(T)$ to problem (\ref{NLW}). This completes the proof of Theorem \ref{globalexistence2}.

\section*{Acknowledgments} The authors would like to extend their heartfelt thanks to Professor Ryo Ikehata. His expertise and support greatly enhanced our work, and we truly appreciate his contribution.
\bibliographystyle{elsarticle-num}

\noindent{\bf Address}:\\
College of Engineering and Technology, American University of the Middle East, Kuwait.\\
\vspace{-7mm}
\begin{verbatim}
e-mail:  ahmad.fino@aum.edu.kw
\end{verbatim}
 Department of Basic Sciences, Deanship of Preparatory and Supporting Studies,
  Imam Abdulrahman Bin Faisal University
P.O. Box 1982 Dammam, Saudi Arabia.\\
\vspace{-7mm}
\begin{verbatim}
e-mail:  mahamza@iau.edu.sa
\end{verbatim}

\end{document}